\newcommand{\vertiii}[1]{{\left\vert\kern-0.25ex\left\vert\kern-0.25ex\left\vert #1 
    \right\vert\kern-0.25ex\right\vert\kern-0.25ex\right\vert}}
\numberwithin{equation}{section}
\newcommand{\margnote}[1]{
\ifthenelse{\boolean{shownotes}}%
{\marginpar{\raggedright\tiny\texttt{#1}}}%
{}%
}
\newcommand{\hole}[1]{
\ifthenelse{\boolean{shownotes}}%
{\begin{center} \fbox{ \rule {.25cm}{0cm}
\rule[-.1cm]{0cm}{.4cm} \parbox{.85\textwidth}{\begin{center}
\texttt{#1}\end{center}} \rule {.25cm}{0cm}}\end{center}}
{}
}
\theoremstyle{plain}
\newtheorem{lemma}{Lemma}[section]
\newtheorem{theorem}[lemma]{Theorem}
\newtheorem{corollary}[lemma]{Corollary}
\theoremstyle{definition}
\newtheorem{remark}[lemma]{Remark}
\newtheorem{definition}[lemma]{Definition}
\theoremstyle{remark}
\newcommand{\R}{\mathbb{R}}
\newcommand{\C}{\mathbb{C}}
\newcommand{\N}{\mathbb{N}}
\newcommand{\tip}{\widetilde{p}}
\newcommand{\timu}{\widetilde{\mu}}
\newcommand{\tiK}{\widetilde{K}}
\newcommand{\tika}{\widetilde{\kappa}}
\newcommand{\tinu}{\widetilde{\nu}}
\newcommand{\tiA}{\widetilde{A}}
\newcommand{\tiB}{\widetilde{B}}
\newcommand{\cL}{{\mathcal{L}}}
\newcommand{\cD}{{\mathcal{D}}}
\newcommand{\cU}{{\mathcal{U}}}
\newcommand{\cE}{{\mathcal{E}}}
\newcommand{\vep}{\varepsilon}
\renewcommand{\Re}{\mathrm{Re}\,} 
\renewcommand{\Im}{\mathrm{Im}\,}
\newcommand{\bu}{\overline{u}}
\newcommand{\bv}{\overline{v}}
\newcommand{\bU}{\overline{U}}
\newcommand{\bmu}{\overline{\mu}}
\newcommand{\bkp}{\overline{\kappa}}
\newcommand{\bL}{\overline{L}}
\newcommand{\bq}{\overline{q}}
\newcommand{\bB}{\overline{B}}
\newcommand{\bA}{\overline{A}}
\newcommand{\hU}{\widehat{U}}
\newcommand{\hV}{\widehat{V}}
\newcommand{\<}{\langle}
\renewcommand{\>}{\rangle}
\begin{document}

\title[Dissipative structure of isothermal Korteweg compressible fluids]{Dissipative structure of one-dimensional isothermal compressible fluids of Korteweg type}

\author[R. G. Plaza]{Ram\'on G. Plaza}

\address{{\rm (R. G. Plaza)} Instituto de 
Investigaciones en Matem\'aticas Aplicadas y en Sistemas\\Universidad Nacional Aut\'onoma de 
M\'exico\\ Circuito Escolar s/n, Ciudad Universitaria, C.P. 04510\\Cd. de M\'{e}xico (Mexico)}

\email{plaza@mym.iimas.unam.mx}

\author[J. M. Valdovinos]{Jos\'{e} M. Valdovinos}

\address{{\rm (J. M. Valdovinos)} Instituto de 
Investigaciones en Matem\'aticas Aplicadas y en Sistemas\\Universidad Nacional Aut\'onoma de 
M\'exico\\ Circuito Escolar s/n, Ciudad Universitaria, C.P. 04510\\Cd. de M\'{e}xico (Mexico)}


\email{valdovinos94@comunidad.unam.mx}

\begin{abstract}
This paper studies the dissipative structure of the system of equations that describes the motion of a compressible, isothermal, viscous-capillar fluid of Korteweg type in one space dimension. It is shown that the system satisfies the genuine coupling condition of Humpherys \cite{Hu05}, which is, in turn, an extension to higher order systems of the classical condition by Kawashima and Shizuta \cite{KaSh88a,ShKa85} for second order systems. It is proved that genuine coupling implies the decay of solutions to the linearized system around a constant equilibrium state. For that purpose, the symmetrizability of the Fourier symbol is used in order to construct an appropriate compensating matrix. These linear decay estimates imply the global decay of perturbations to constant equilibrium states as solutions to the full nonlinear system, via a standard continuation argument.
\end{abstract}

\keywords{Isothermal Korteweg compressible fluids, genuine coupling, dissipative structure, global decay.}

\subjclass[2010]{35Q35 (primary), 76N99, 35L65, 35B40 (secondary)}

\maketitle

\setcounter{tocdepth}{1}



\section{Introduction}

In this paper we consider the following system of equations,
\begin{equation}
\label{SysLagi}
\begin{aligned}       
	v_t - u_x &= 0, \\
        u_t + p(v)_x &= \Big( \frac{\mu(v)}{v} u_x \Big)_x- \Big( \kappa(v) v_{xx}+\tfrac{1}{2}\kappa'(v)v_x^{2} \Big)_x,\\       
\end{aligned}
\end{equation}
with $x\in \mathbb{R}$, $t>0$, for unknowns $u=u(x,t)$ (velocity) and $v=v(x,t)$ (specific volume), describing the dynamics of a compressible isothermal fluid exhibiting viscosity and capillarity in a one dimensional unbounded domain and in Lagrangian coordinates. The function $p = p(v)$ represents the pressure and  $\mu = \mu(v)$ and $\kappa = \kappa(v)$ correspond to nonlinear viscosity and capillarity coefficients, respectively.

In order to describe fluid capillarity effects in diffuse interfaces for liquid vapor flows, Korteweg \cite{Kortw1901} formulated (based on van der Waals' approach \cite{vdW1894}) a constitutive equation of stress tensors that included density gradients and that was, in general, incompatible with equilibrium thermodynamics. To circumvent this difficulty, the model was later rigorously derived by Dunn and Serrin \cite{DS85} under the viewpoint of rational mechanics by introducing the concept of interstitial work. System \eqref{SysLagi} is the simplified isothermal version of the model derived by Dunn and Serrin in one space dimension and in the case of nonlinear viscosity and capillarity coefficients. Notice that when $\kappa \equiv 0$ system \eqref{SysLagi} reduces to the compressible Navier-Stokes equations. Hence, it is also known as the one-dimensional \emph{Navier-Stokes-Korteweg system} in the literature (cf. \cite{ChZha14,CLS19,BrGL-V19}).

Since Dunn and Serrin's work, the Navier-Stokes-Korteweg system has been the subject of intense research thanks to its rich mathematical structure and diverse applications. Many mathematical results abound in the literature, mainly pertaining to local and global existence of classical solutions \cite{HaLi94,HaLi96a,HaLi96b}, weak solutions \cite{BrDjL03,Hasp09,Hasp11,DD01} and strong solutions \cite{Kot08,Kot10}; to the decay of perturbations of equilibria \cite{WaTa11,TZh14,GLZh20}, or to the description of phase-field transition fronts \cite{Sl83,Sl84,FrKo17,FrKo19,HaSl83}, just to mention a few. The reader is also referred to the recent works \cite{KoTs21,KSX21,KMS22}. In sum, the mathematical literature on Korteweg models is very vast and we only mention an abridged list of references. In this contribution, we examine the \emph{dissipative structure} of the isothermal Navier-Stokes-Korteweg system in one dimension.

The seminal works by Kawashima \cite{KaTh83} and by Kawashima and Shizuta \cite{KaSh88a,KaSh88b,ShKa85} established the theoretical basis to study the strict dissipativity of a large class of second order systems of the generic form,
\begin{equation}
\label{ssecord}
U_t + F(U)_x = (B(U)U_x)_x + Q(U),
\end{equation}
which encompass a large number of models such as the Navier-Stokes and Navier-Stokes-Fourier systems, relaxation models, and many other model equations of hyperbolic-parabolic type in the continuum mechanics literature (see, e.g., the survey articles \cite{Ka86,XuKa17}, as well as \cite{KY04,UDK12,UKS84,KY09,HaNa03,Y04} and the numerous references therein). Kawashima and Shizuta showed that strict dissipativity, or the property that solutions to the linearized equations around any equilibrium state exhibit some time decay structure (for the precise statement, see \cite{ShKa85}), is tantamount to a simple algebraic \emph{genuine coupling condition} expressing that no eigenvector of the hyperbolic part of the linearized operator lies on the kernel of the dissipative terms (viscous and/or relaxation). In their theory, the symmetrizability of the systems (in the classical sense of Friedrichs \cite{Frd54}) plays a key role. Genuine coupling allows the existence of \emph{compensating matrices}, which are very useful to obtain energy estimates both at linear and nonlinear levels. Among its many consequences, it is possible to prove the nonlinear time decay of small perturbations around equilibrium states for generic genuinely coupled second order systems.

In a later work, Humpherys \cite{Hu05} extended Kawashima and Shizuta's notions of strict dissipativity, genuine coupling and symmetrizability to viscous-dispersive one-dimensional systems of \emph{any} order. First, Humpherys introduces the concept of \emph{symbol symmetrizability}, which is a generalization of the classical notion of term-wise symmetrizability, and extends the genuine coupling condition for any symmetric Fourier symbol of the linearized higher-order operator around an equilibrium state. Humpherys then shows that his notion of genuine coupling is equivalent to the strict dissipativity of the system and to the existence of a compensating matrix \emph{symbol} for the linearized system. The potential applications are numerous and include viscous-dispersive systems such as the Korteweg model under consideration. Surprisingly, there is no study available in the literature that analyzes whether genuine coupling for higher order systems can be applied to obtain the decay of perturbations to equilibrium states, not even at the linear level. 

The purpose of this paper is, therefore, very concrete: to examine the dissipative structure of the Korteweg system \eqref{SysLagi} in the sense of Humpherys and to apply such structure (more precisely, the compensating matrix symbol) in order to obtain the time-decay of solutions, both at the linear and the nonlinear levels. Hence, our contributions are mainly of methodological nature and can be summarized as follows:
\begin{itemize}
\item[-] It is shown that the Navier-Stokes-Korteweg system \eqref{SysLagi} is symbol symmetrizable (although not Friedrichs symmetrizable) and that it satisfies the genuine coupling condition.
\item[-] Thanks to a new degree of freedom in the choice of the symbol symmetrizer it is possible to construct an appropriate compensating matrix symbol for the linearized system (see Lemma \ref{lemourK} below), skew-symmetric and bounded above uniformly in the Fourier parameter, which allows to close the energy estimates at the linear level (Lemma \ref{lembee}). 
\item[-] It is shown that the linear decay estimates can be used to prove the nonlinear decay of small perturbations of constant equilibrium states, culminating into the global existence and optimal time-decay of perturbation solutions (see Theorem \ref{gloexth} below). 
\end{itemize}

It is to be noticed that, although it is similar in spirit to its second order counterpart, the proof of the linear decay rates for the system (Lemma \ref{lembee}) does not follow automatically from the hyperbolic-parabolic result. The main reason is that the system is of third order and that both the symmetrizer and the compensating matrix are now symbols and depend on the Fourier parameter as well. Albeit Humpherys' equivalence theorem (see Theorem 6.3 in \cite{Hu05}) implies the existence of a compensating matrix symbol, in order to close the energy estimates we choose an appropriate symbol suitable for our needs. Our choice is simple and particular to the Korteweg system \eqref{SysLagi}. We conjecture that for generic, higher-order systems, the symbol should be constructed for different Fourier (or wave number) parameter regimes.

It is also important to observe that the final existence and global decay result (Theorem \ref{gloexth}) is not new. For example, Wang and Tan \cite{WaTa11} obtained optimal decay rates for the system in dimension three (and we conjecture that the result in one space dimension can be obtained by adapting their method of proof; see also \cite{GLZh20,TZh14} for related results). The analyses in these works, however, consist of obtaining decay rates for the linearized problem and then performing nonlinear energy estimates, both steps relying heavily on the intrinsic structure of the Korteweg model. The technique presented here is based solely on the genuine coupling condition and on the appropriate construction of the compensating matrix symbol. 
%
%
%
%
%
%
%
Hence, the description of how to obtain global decay from the dissipative structure of the underlying linearized Korteweg system compensates, we hope, for the lack of novelty of some of the results. We actually believe that the present analysis has prospects for the study of the decay structure of larger, more complicated systems, or with higher order derivatives, for which a direct approach may seem cumbersome or difficult to tackle. For example, the analysis of the full heat-conducting Korteweg system with nonlinear viscosity and capillarity coefficients will be addressed in a companion paper \cite{PlVa2b}.


The paper is structured as follows. Section \ref{seckorteweg} contains a description of the equations for a one-dimensional Korteweg compressible fluid, as well as the main hypotheses of the paper. The known local existence theory is reviewed in section \ref{localexist}. In section \ref{secgencoup} we examine the genuine coupling condition for the system of equations and exhibit a family of symbol symmetrizers. With this information, in section \ref{seclineardecay} we obtain the linear decay rates for the associated semigroup. Finally, section \ref{secglobal} contains the global decay result for small perturbations of constant equilibrium states.

\subsection*{On notation} 

We denote the real and imaginary parts of a complex number $\lambda \in \C$ by $\Re\lambda$ and $\Im\lambda$, respectively, as well as complex conjugation by ${\lambda}^*$. The standard inner product of two vectors $a$ and $b$ in $\C^n$ will be denoted by $\langle a,b \rangle = \sum_{j=1}^n a_j b_j^*$. Complex transposition of matrices are indicated by the symbol $A^*$, whereas simple transposition is denoted by the symbol $A^\top$. Standard Sobolev spaces of functions on the real line will be denoted as $H^s(\R)$, with $s \in \R$, endowed with the standard inner products and norms. The norm on $H^s(\R)$ will be denoted as $\| \cdot \|_s$ and the norm in $L^2$ will be henceforth denoted by $\| \cdot \|_0$. Any other $L^p$ -norm will be denoted as $\| \cdot \|_{L^p}$ for $p \geq 1$. 

\section{One-dimensional isothermal compressible fluids of Korteweg type}
\label{seckorteweg}

In Eulerian coordinates, the system of equations describing isothermal compressible fluids in one space dimension has the form

\begin{equation}
\label{SysEul}
\begin{aligned}
        \partial_t\rho +\partial_{y}(\rho u) &= 0,   \\
        \partial_t(\rho u) + \partial_{y}\left(\rho u^{2}+ \tip(\rho) \right) &= \partial_{y} \big( \timu(\rho)u_{y} + \tiK \big),\\       
\end{aligned} 
\end{equation}
where $y\in \mathbb{R}$ and $t>0 $ denote position and time, respectively, and the unknown scalar functions $\rho= \rho(y,t)$ and $u= u (y,t)$ denote the denote the density and velocity fields, respectively. Here, the thermodynamic pressure, $\tip = \tip(\rho)$, is a smooth function of the density and the density-dependent viscosity coefficient, $\timu= \timu(\rho):= 2 \nu (\rho)+ \lambda (\rho) > 0$, is a short-cut for the bulk and kinematic viscosity coefficients, $\nu$ and $\lambda$, respectively, satisfying $2\nu + 3\lambda \geq 0$, $\nu >0$ for all $\rho$. Finally, $\tiK$ is the Korteweg tensor, having the form
\begin{equation}
\label{KortwTensor}
\tiK := \rho \tika(\rho) \partial_{y}^{2}\rho+ \tfrac{1}{2}\big( \rho \tika'(\rho)- \tika(\rho) \big) \rho_{y}^{2},
\end{equation}
where $\tika = \tika(\rho)>0$ is the (smooth) capillary coefficient. See, e.g., \cite{CCD15, HaLi94, DS85, Kortw1901, vdW1894} and the references therein. It is further assumed that $\tip, \timu, \tika \in C^{\infty}\left( (0,\infty) \right)$, i.e. that they are smooth functions of $\rho \in (0, \infty)$ and that they satisfy $\tip, \timu, \tika >0$.

For simplicity, let us recast (\ref{SysEul}) and (\ref{KortwTensor}) in Lagrangian coordinates. For that purpose, we define the Lagrangian variable as
\[
x(y, t) := \int_0^{y} \rho(\xi, t) \: d \xi,
\]
and the specific volume as $v := 1/\rho >0$. Under this change of variables it is easy to verify that the equation of conservation of mass (first equation in (\ref{SysEul})) transforms into
\begin{equation}
\label{MassLag}
v_t= u_x.
\end{equation}
There is a systematic way to transform from Eulerian to Lagrangian coordinates back and forth. Following Serre \cite{Ser1}, any balance law in Eulerian coordinates of the form
\[
\partial_tW_{j}+ \partial_{y}\Phi_{j}=0,
\]
for extensive quantities $W_{j}$ and $\Phi_{j}$, is equivalent to
\[ 
\partial_t(v W_{j})+ \partial_x(\Phi_{j}- uW_{j})=0,
\]
in Lagrangian coordinates. This property holds because the differential form $(\Phi_{j}-uW_{j})dt- v W_{j} dx$ is exact as the reader may easily verify. For example, taking $W_{4} \equiv 1$ and $\Phi_{4} \equiv 0$ we arrive at the continuity equation \eqref{MassLag}. The momentum equation in Lagrangian coordinates can be obtained by considering $W_{3}= \rho u$ and $\Phi_{3}= \rho u^{2}+ \tip(\rho) - \timu(\rho)u_{y}-\tiK$. After some straightforward algebra the resulting equation is
\begin{equation}
\label{MomLag}
u_t+p(v)_x = \Big( \frac{\mu(v)}{v} u_x \Big)_x- \Big( \kappa(v) v_{xx} + \tfrac{1}{2} \kappa'(v) v_x^{2} \Big)_x,
\end{equation} 
where we have defined
\begin{equation}
\label{CofLag}
\begin{aligned}       
	p(v) &:= \tip(1/v), \\
       \mu(v) &:= \timu(1/v),  \\ 
        \kappa(v) &:=  \frac{\tika(1/v)}{v^5},
\end{aligned}
\end{equation}
as $C^{\infty}$ functions of the specific volume $v \in (0, \infty)$. Details are left to the reader (see \cite{BDD06,CCD15,Ser1} for further information). Henceforth, the system of equations describing one-dimensional isothermal compressible flow with viscosity and capillary in Lagrangian coordinates reads,
\begin{equation}
\label{SysLag}
\begin{aligned}       
	v_t - u_x &= 0, \\
        u_t + p(v)_x &= \Big( \frac{\mu(v)}{v} u_x \Big)_x- \Big( \kappa(v) v_{xx}+\tfrac{1}{2}\kappa'(v)v_x^{2} \Big)_x,\\       
\end{aligned}
\end{equation}
with $x\in \mathbb{R}$, $t>0$, for the unknown velocity, $u=u(x,t)$, and specific volume, $v=v(x,t)$.

In view that the process is isothermal, the thermodynamic variables depend upon the specific volume alone. We make the following assumptions:
\begin{itemize}
\item[(H$_1$)] \phantomsection\label{H1} The specific volume is the only independent thermodynamic variable. It belongs to the open domain 
\[
\cD = \left\lbrace v \in \R \: : \: 0 < C_0^{-1} < v < C_0 \right\rbrace,
\]
for some uniform positive constant $C_0$.\\
\item[(H$_2$)] \phantomsection\label{H2} The pressure, the combined viscosity and the capillary coefficients are smooth functions of $v \in \cD$, $p, \mu, \kappa \in C^{\infty} (\overline{\cD})$, satisfying $p, \mu, \kappa > 0$ for all $v \in \cD$.\\
\item[(H$_3$)] \phantomsection\label{H3} There exists a finite number of open subsets $\cU_j \subset \mathcal{D}_0$, $1 \leq j \leq N$, $N \in \N$, such that $\bigcup_{j=1}^N \cU_j \subset \cD$ and satisfying $p'(v) < 0$ for all $v \in \cU_j$, $j=1, \ldots, N$.
\end{itemize}

\begin{remark}
Hypothesis \hyperref[H3]{(H$_3$)} guarantees the hyperbolicity in each open set $\cU_j$ of the underlying ``inviscid" system
\[
\begin{aligned}       
	v_t - u_x &= 0, \\
        u_t + p(v)_x &= 0,
\end{aligned}
\]
also known as the $p$-system. This assumption allows us to consider models for phase transitions such as the isothermal van der Walls equation of state,
\[
p(v) = \frac{R \theta}{v -b} - \frac{a}{v^{2}},
\]
for some positive constants $a, b, R, \theta >0$. For temperatures below a critical value, $\theta < \theta_{c}= 8a/27bR$, the shape of $p$ exhibits a spinodal region $v \in (\alpha, \beta)$, where $p'(v) > 0$, and which separates the liquid and vapor phases. Thus, for the van der Walls model we have $N = 2$ and $\cU_1=(b,\alpha)$ (liquid phase), $\cU_{2}=(\beta, C_0)$ (vapor phase), for some $0< b < \alpha < \beta$ and with $\cD = \left\lbrace v \: : \: b < v < C_0 \right\rbrace$. Another typical example is the adiabatic gas law
\[
p(v) = \frac{R\theta}{v^\gamma}, 
\]
for some constant $\gamma > 1$, so that $N = 1$, $\cU_{1} = \cD= \left\lbrace v \: : \: C_0^{-1} < v < C_0 \right\rbrace$ because $p'(v) = -\gamma R \theta v^{-(\gamma+1)}< 0$ for all $v \in \cD$.
\end{remark}

\begin{remark}
Hypothesis \hyperref[H2]{(H$_2$)} encompasses the most general form of the viscosity and capillary coefficients. Typical choices found in the literature for the Eulerian coefficients as functions of the density are of the algebraic type, $\timu(\rho)= \mu_0\rho^{\alpha}$ and $\tika(\rho)= \kappa_0\rho^{\beta}$, for some $\alpha, \beta >0$ and constants $\mu_0$, $\kappa_0$. These choices yield
\begin{equation}
\label{physcoeffalg}
\kappa(v)= \frac{\kappa_0}{v^{\beta +5}}, \quad \mu(v)= \frac{\mu_0}{v^{\alpha}}
\end{equation}
(see, e.g., \cite{CCD15}). It is also common to find $\kappa(v)\equiv \kappa_0 >0$, that is, $\tika(\rho)=\kappa_0 \rho^{-5}$, and $\mu(v)\equiv \mu_0 >0$ with $\kappa_0, \mu_0$ constants (see \cite{Hu09}). In \cite{BrGL-V19}, the authors propose a relation between the two coefficients, by defining the Eulerian capillary coefficient $\tika(\rho)$ as
\[
\tika(\rho) = \frac{\tinu'(\rho)^{2}}{\rho}, 
\]
where $\tinu(\rho)=\rho^{(s+3)/2}$, $s \in \R$ (bulk viscosity). In \cite{ChHa13a} the authors consider, for example, viscosity coefficients of St. Venant type satisfying $\timu(\rho)\geq \rho^\alpha$ with $0 \leq \alpha < 1/2$. In this paper (like, for instance, in \cite{LiLuo16}), we make the general assumption \hyperref[H2]{(H$_2$)}.
\end{remark}

\section{Local well-posedness of perturbations of equilibrium states}
\label{localexist}
The local well-posedness of system \eqref{SysLag} (or equivalently, of system \eqref{SysEul}) is well understood. See, for example, the analysis of Hattori and Li \cite{HaLi94} in the case of viscosity and capillary coefficients of algebraic type. The global existence was studied by the same authors in \cite{HaLi96a, HaLi96b}. The work of Benzoni-Gavage \emph{et al.} \cite{BDD06} focuses on the purely capillar case (with $\mu=0$), whereas Danchin and Dejardins \cite{DD01} consider constant viscosity and capillary coefficients in critical Besov spaces. Variable (or density-dependent) coefficients have been studied in \cite{HaLi94, ChHa13a,CCD15}. In sum, the literature is vast and the local existence of classical solutions is now a well-established fact. Here we follow the presentation of Chen \emph{et al.} \cite{CCD15}, who recast local well-posedness in terms of perturbations of an equilibrium state, a formulation which is suitable for our needs.

Let $(\bv,\bu)\in \cU_r \times \mathbb{R}  $, for some fixed $1\leq r \leq N$, be a constant equilibrium state. Let us consider solutions to system (\ref{SysLag}) of the form $(\bv+v,\bu+u)$ where, with a slight abuse of notation, $v$ and $u$ now represent perturbations of the constant state $(\bv, \bu)$. Upon substitution into (\ref{SysLag}), one obtains the following nonlinear system for the perturbation $(v, u)$:
\begin{equation}
\label{PertSys}
\begin{aligned}
        v_t-u_x &= 0,   \\
        u_t + p(\bv+v)_x &= \Big( \frac{\mu(\bv+v)}{\bv+v} u_x \Big)_x - \Big(\kappa(\bv+ v) v_{xx} + \tfrac{1}{2}\kappa'(\bv+v)v_x^{2} \Big)_x,\\       
\end{aligned}
\end{equation}
assuming, of course, that the perturbation $v$ is such that $\bv+v \in \cU_r$ (the perturbed specific volume remains in one open phase $\cU_r$). Under assumption \hyperref[H2]{(H$_2$)} let us denote
\[
0 < \sup_{v \in \cU_r} \mu(v) \leq \max_{\overline{\cD}} \mu(v) =: \mu_0 < \infty, \quad 0 < \sup_{v \in \cU_r} \kappa(v) \leq \max_{\overline{\cD}} \kappa(v) =: \kappa_0 < \infty.
\]

For positive constants $M\geq m >0$, $T>0$ and for any $s \geq 3$ we denote the function space,
\[
\begin{aligned}
X_s((0,T);m,M) := \Big\{ (v,u) \, : \; \, &v-\bv \in C((0,T); H^{s+1}(\R)) \cap C^1((0,T); H^{s-1}(\R)), \\
& u-\bu \in  C((0,T); H^s(\R)) \cap C^1((0,T); H^{s-2}(\R)),\\
&(v_x, u_x) \in L^2((0,T); H^{s+1}(\R) \times H^s(\R)), \\
&\text{and} \; m \leq v(x,t) \leq M \; \text{a.e. in } \, (x,t) \in \R \times (0,T) \Big\}.
\end{aligned}
\]
%
For $U := (v+ \bv, u+ \bu)^\top \in X_{s}\left((0,T); m, M \right)$ and any $0 \leq t_1 \leq t_2 \leq T$ we define
\begin{equation}
\label{tripnorm}
\vertiii{U}^2_{s,[t_{1},t_{2}]} := \sup_{t_{1} \leq t \leq t_{2}} \left(\Vert v(t)\Vert_{s+1}^{2}+ \Vert u(t) \Vert_{s}^{2}  \right) + \int_{t_{1}}^{t_{2}} \left(  \Vert v_x(t)\Vert_{s+1}^{2} + \Vert u_x(t)\Vert_{s}^{2} \right)\: dt, 
\end{equation}
as well as
\[
\vertiii{U}_{s,T}:= \vertiii{U}_{s,[0,T]}.
\]

Therefore, the following local existence theorem for perturbations can be established (see \cite{CCD15, HaLi94} and Remark 1.1 in \cite{HaLi96a}). We omit its proof for the sake of brevity.
\begin{theorem}[local existence]
\label{thmlocale}
Under hypotheses \hyperref[H1]{(H$_1$)} - \hyperref[H3]{(H$_3$)}, suppose that $(\bv,\bu)\in  \cU_r \times \R $, for some fixed $1\leq r \leq N$, is a constant equilibrium state. Suppose that $v_0\in H^{s+1}(\R)$ and $u_0\in H^{s}(\R)$, $s \geq 3$, satisfy
\begin{equation}\label{thlocex-hyp0}
0 < m_0 \leq \bv + v_0(x) \leq M_0, \qquad \bv + v_{0}(x) \in \cU_r,
\end{equation}
a.e. in $x \in \R$ for some positive constants $M_0 \geq m_0 > 0$. Then there exists $T > 0$ depending only on $\kappa_0, \mu_0, m_0, M_0, \| u_0 \|_s$ and $\| v_0 \|_{s+1}$ such that the Cauchy problem for system \eqref{PertSys} with initial data $v(0) = v_0+ \bv$, $u(0)=u_0+ \bu$ admits a unique smooth solution $U = (v+ \bv, u + \bu)^\top \in X_{s}\left((0,T);\tfrac{1}{2} m_0, 2M_0 \right)$ satisfying
\begin{equation}
\label{est.loc.exis}
\vertiii{U}_{s,T} \leq C \left(  \Vert v_0 \Vert_{s+1}^{2} + \Vert u_0 \Vert_{s}^{2} \right)^{1/2},
\end{equation}
for some constant $C > 1$ depending only on $m_0$ and $M_0$.

%
%
%
\end{theorem}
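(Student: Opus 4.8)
The plan is to construct the solution by a classical linear iteration scheme combined with energy estimates tailored to the hyperbolic--parabolic--dispersive structure of \eqref{PertSys}, and to pass to the limit using uniform high-order bounds together with contraction in a low-order norm. Set $(v^0,u^0)\equiv(v_0,u_0)$; given the $n$-th iterate $(v^n,u^n)$ with $\tfrac12 m_0\le \bv+v^n\le 2M_0$ and $\bv+v^n\in\cU_r$ on $[0,T]$, define $(v^{n+1},u^{n+1})$ as the solution of the \emph{linear} problem obtained by freezing the coefficients at $v^n$,
\[
\begin{aligned}
v^{n+1}_t-u^{n+1}_x&=0,\\
u^{n+1}_t + p'(\bv+v^n)\,v^{n+1}_x &= \Big(\tfrac{\mu(\bv+v^n)}{\bv+v^n}u^{n+1}_x\Big)_x-\big(\kappa(\bv+v^n)v^{n+1}_{xx}\big)_x + g^n,
\end{aligned}
\]
with $(v^{n+1},u^{n+1})(0)=(v_0,u_0)$, where $g^n$ collects the remaining lower-order terms (the capillary remainder $-\tfrac12\partial_x(\kappa'(\bv+v^n)(v^n_x)^2)$ and the pressure discrepancy, treated as a forcing). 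For fixed $n$ this linear system, with principal part $v_t=u_x$, $u_t=(\tfrac{\mu}{v}u_x)_x-\kappa v_{xxx}+\cdots$, is solvable in the class $X_s$ by a Friedrichs-mollifier/Galerkin scheme: the linear version of the energy identity below supplies the needed a priori bound, the third-order term contributing---after the substitution $u_x=v_t$---exactly $-\tfrac12\tfrac{d}{dt}\!\int\kappa(\bv+v^n)|\partial_x^{k+1}v^{n+1}|^2\,dx$ at each differentiation order $k$, which is the mechanism that puts $v$ one spatial derivative above $u$ and explains why the system, although not Friedrichs symmetrizable, is still well posed.

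Next I would establish a \emph{uniform} bound along the iteration. Differentiating the linear system $\partial_x^k$ for $0\le k\le s$, forming the energy
\[
\begin{aligned}
\mathcal{E}_n^2:=&\sum_{k=0}^{s}\int_{\R}\Big(|\partial_x^k u^{n+1}|^2+|p'(\bv+v^n)|\,|\partial_x^k v^{n+1}|^2+\kappa(\bv+v^n)|\partial_x^{k+1}v^{n+1}|^2\Big)\,dx\\
&+\vep\sum_{k=0}^{s-1}\int_{\R}\kappa(\bv+v^n)\,\partial_x^{k+1}v^{n+1}\,\partial_x^k u^{n+1}\,dx,
\end{aligned}
\]
and integrating by parts, one obtains a differential inequality $\tfrac{d}{dt}\mathcal{E}_n^2+c\big(\|u^{n+1}_x\|_s^2+\|v^{n+1}_x\|_{s+1}^2\big)\le C\,\Phi\big(\vertiii{U^n}_{s,T}\big)\big(1+\mathcal{E}_n^2\big)$, with $c>0$ for $\vep$ small: the parabolic term $(\tfrac{\mu}{v}u_x)_x$ produces the dissipation $\|u_x\|_s^2$, while the Kawashima-type cross term produces $\|v_x\|_{s+1}^2$, its byproduct $-\vep\|\partial_x^{k+1}u^{n+1}\|^2$ ($k\le s-1$) being absorbed into the parabolic dissipation. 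The coefficient and commutator terms are handled by Moser-type product and composition estimates together with the pointwise bounds $\tfrac12 m_0\le\bv+v^n\le 2M_0$ and hypotheses \hyperref[H1]{(H$_1$)}--\hyperref[H2]{(H$_2$)}; moreover $\mathcal{E}_n^2$ is equivalent to $\sup_{[0,t]}(\|v^{n+1}\|_{s+1}^2+\|u^{n+1}\|_s^2)$ with constants depending only on $m_0,M_0$ (using $p'<0$ on $\cU_r$ from \hyperref[H3]{(H$_3$)}). Setting $R^2:=4C_1(\|v_0\|_{s+1}^2+\|u_0\|_s^2)$ and choosing $T>0$ small---depending only on $\kappa_0,\mu_0,m_0,M_0,\|u_0\|_s,\|v_0\|_{s+1}$---so that Gronwall keeps $\vertiii{U^{n+1}}_{s,T}\le R$ and, since $v^{n+1}=v_0+\int_0^t u^{n+1}_x\,d\tau$ with $\sup|u^{n+1}_x|\le CR$, so that $\bv+v^{n+1}$ remains in $[\tfrac12 m_0,2M_0]\cap\cU_r$, an induction closes the uniform bound.

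Then I would prove convergence. For the differences $V^n:=v^{n+1}-v^n$, $W^n:=u^{n+1}-u^n$, which solve a linear system of the same type with right-hand sides that are bounded-coefficient combinations of derivatives of $(V^{n-1},W^{n-1})$, the same energy computation carried out at a lower order yields $\sup_{[0,T]}(\|V^n\|_1^2+\|W^n\|_0^2)+\int_0^T(\cdots)\le C_R\,T\,\big(\sup_{[0,T]}\|V^{n-1}\|_1^2+\cdots\big)$, so after possibly shrinking $T$ the iteration is a contraction in $C([0,T];H^1(\R)\times L^2(\R))$. Interpolating the resulting strong convergence against the uniform $X_s$-bound gives convergence in $C([0,T];H^{s'+1}\times H^{s'})$ for every $s'<s$, which is enough to pass to the limit in \eqref{PertSys}; Banach--Alaoglu and weak lower semicontinuity place the limit $U=(v+\bv,u+\bu)$ in the space underlying \eqref{tripnorm}, and a standard argument---the equations give $v_t,u_t$ in lower-order spaces, hence weak-in-time continuity, which a Bona--Smith-type energy-balance argument upgrades to strong continuity---shows $U\in X_s((0,T);\tfrac12 m_0,2M_0)$ with $\bv+v\in\cU_r$; the estimate \eqref{est.loc.exis} is inherited from the uniform bound, and uniqueness follows from the difference estimate applied to two solutions.

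The main obstacle is the a priori energy estimate, which must accomplish two things at once: use $v_t=u_x$ to recast the third-order capillary term as the time derivative of the $\kappa\|\partial_x^{k+1}v\|^2$ energy---the source of the asymmetric regularity in the $X_s$ scale and of the well-posedness despite the lack of a Friedrichs symmetrizer---and generate the parabolic-type dissipation $\int_0^T(\|u_x\|_s^2+\|v_x\|_{s+1}^2)\,dt$ through the cross term $\vep\int\kappa\,\partial_x^{k+1}v\,\partial_x^k u$, the constraint $k\le s-1$ needed to absorb its byproduct being precisely what ties together the two component norms of $X_s$. Everything else is routine bookkeeping with Moser and Gagliardo--Nirenberg inequalities under the uniform sup-bound on $v$; since this machinery is by now standard for Korteweg-type systems (see \cite{HaLi94,CCD15} and Remark~1.1 in \cite{HaLi96a}), one may, as we do here, simply invoke it.
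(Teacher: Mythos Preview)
Your proposal is correct and, in fact, goes further than the paper itself: the paper omits the proof of Theorem~\ref{thmlocale} entirely, simply citing \cite{CCD15,HaLi94} and Remark~1.1 in \cite{HaLi96a} with the sentence ``We omit its proof for the sake of brevity.'' Your sketch is a faithful outline of precisely the argument in those references---linear iteration with frozen coefficients, the energy functional weighted by $|p'|$ and $\kappa$ that exploits the substitution $u_x=v_t$ to turn the third-order capillary term into $\tfrac{d}{dt}\!\int\kappa|\partial_x^{k+1}v|^2$, a Kawashima-type cross term to recover the $\|v_x\|_{s+1}$ dissipation, Moser estimates for the commutators, contraction in a low norm, and interpolation to pass to the limit---so there is nothing to compare: you have supplied what the paper deliberately left out, and you correctly identify the same sources at the end of your own write-up.
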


\begin{remark}
Notice that, by Sobolev imbedding theorem, the local solution satisfies $v + \bv \in \cU_r$ as long as the initial perturbation $(v_{0}, u_{0})$ is small enough. Also $\mu_{0}$, $\kappa_{0}$, $m_{0}$ and $M_{0}$ depend on such initial perturbation. Hence, for convenience, we recast Theorem \ref{thmlocale} in terms of perturbation variables.
\end{remark}

\begin{theorem}[local existence of small perturbations]
\label{thmlocalepert}
Under hypotheses \hyperref[H1]{(H$_1$)} - \hyperref[H3]{(H$_3$)}, assume that $(\bv,\bu)\in  \cU_r \times \R $, for some fixed $1\leq r \leq N$, is a constant equilibrium state. Suppose the initial perturbation of the constant equilibrium state satisfies $\left(v_{0}, u_{0} \right) \in H^{s+1}(\R) \times H^{s}(\R)$, for $s \geq 3$. Then we can find a positive constant $a_{0}$ such that if 
\begin{equation}
\label{smallpert}
\left( \Vert v_{0} \Vert_{s+1}^{2} + \Vert u_{0} \Vert_{s}^{2} \right)^{1/2} \leq a_{0},
\end{equation}
then there is a time $T_{1}= T_{1}(a_{0})$ such that a unique solution $(v + \bv, u + \bu) ^\top \in X_{s}\left((0,T_{1});\tfrac{1}{2} m_0, 2M_0 \right)$ (here $m_{0}$ and $M_{0}$ satisfying \eqref{thlocex-hyp0}) exists for the Cauchy problem \eqref{PertSys} with initial condition $v(0)=v_{0}+\bv$, $u(0)=u_{0}+\bu$, and the estimate
\begin{equation}
\label{est.loc.exis 1}
\vertiii{U}_{s,T_{1}} \leq C_{0} \left(  \Vert v_0 \Vert_{s+1}^{2} + \Vert u_0 \Vert_{s}^{2} \right)^{1/2},
\end{equation}
holds for some constant $C_{0}>1$ depending only on $a_{0}$. 
\end{theorem}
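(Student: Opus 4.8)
The plan is to deduce Theorem \ref{thmlocalepert} directly from the local existence result Theorem \ref{thmlocale}: the latter already supplies the solution in the class $X_s$ together with the energy bound, so the only additional work is a smallness argument that fixes the constants $m_0$, $M_0$ — and hence the life span and the constant in the estimate — uniformly over all sufficiently small perturbations of the fixed equilibrium $(\bv,\bu)\in\cU_r\times\R$.

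First I would use that $\cU_r$ is open, so that $\delta_r:=\mathrm{dist}\big(\bv,\R\setminus\cU_r\big)>0$, together with the Sobolev embedding $H^{s+1}(\R)\hookrightarrow L^\infty(\R)$ (valid for $s\geq 3$), which gives an absolute constant $C_{\mathrm S}>0$ with $\|w\|_{L^\infty}\leq C_{\mathrm S}\|w\|_{s+1}$ for all $w\in H^{s+1}(\R)$. Choose $a_0:=\tfrac12\delta_r/C_{\mathrm S}$. If \eqref{smallpert} holds then $\|v_0\|_{L^\infty}\leq C_{\mathrm S}\|v_0\|_{s+1}\leq\tfrac12\delta_r$, so that, setting
\[
m_0:=\bv-\tfrac12\delta_r>0,\qquad M_0:=\bv+\tfrac12\delta_r,
\]
one has $0<m_0\leq\bv+v_0(x)\leq M_0$ and $\bv+v_0(x)\in\cU_r$ for a.e.\ $x\in\R$; thus \eqref{thlocex-hyp0} is satisfied with these values. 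These $m_0,M_0$ depend only on $\bv$ and $\cU_r$ (hence only on $a_0$), and since $\cU_r\subset\cD$ the perturbed specific volume remains in $\overline{\cD}$, so that the coefficients $\mu_0=\max_{\overline{\cD}}\mu$ and $\kappa_0=\max_{\overline{\cD}}\kappa$ appearing in Theorem \ref{thmlocale} are absolute, independent of the perturbation.

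It then remains to apply Theorem \ref{thmlocale} with data $v(0)=v_0+\bv$, $u(0)=u_0+\bu$ and the bounds $m_0,M_0$ just fixed: it yields a unique $U=(v+\bv,u+\bu)^\top\in X_s\big((0,T);\tfrac12 m_0,2M_0\big)$ on an interval $(0,T)$ whose length depends only on $\mu_0,\kappa_0,m_0,M_0$ and on $(\|v_0\|_{s+1}^2+\|u_0\|_s^2)^{1/2}$, together with the bound $\vertiii{U}_{s,T}\leq C\big(\|v_0\|_{s+1}^2+\|u_0\|_s^2\big)^{1/2}$, $C>1$ depending only on $m_0,M_0$. To obtain a single life span valid on the whole ball $(\|v_0\|_{s+1}^2+\|u_0\|_s^2)^{1/2}\leq a_0$, I would invoke the standard fact — intrinsic to the iteration scheme behind Theorem \ref{thmlocale} — that its existence time can be taken to be a non-increasing function of the data size; taking $T_1=T_1(a_0)$ to be that time for data of size $a_0$, every perturbation satisfying \eqref{smallpert} admits a solution on the common interval $(0,T_1)$ in $X_s\big((0,T_1);\tfrac12 m_0,2M_0\big)$, and, since $\vertiii{U}_{s,T_1}\leq\vertiii{U}_{s,T}$, the previous bound gives exactly \eqref{est.loc.exis 1} with $C_0:=C$, which depends only on $m_0,M_0$ and hence only on $a_0$. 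The statement being essentially a bookkeeping reformulation of Theorem \ref{thmlocale}, the only point requiring mild care is this uniformity of the life span over the $a_0$-ball, which is not literally asserted in Theorem \ref{thmlocale} and must be read off from its proof (equivalently: shrinking the data only relaxes the smallness condition imposed on $T$ in the contraction estimate, so the admissible interval does not shrink).
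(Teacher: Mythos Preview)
Your proposal is correct and follows exactly the approach the paper indicates: the paper does not give a separate proof of Theorem~\ref{thmlocalepert} but simply states it as a recasting of Theorem~\ref{thmlocale}, noting in the preceding Remark that the Sobolev embedding keeps $\bv+v_0\in\cU_r$ for small perturbations and that $m_0,M_0,\mu_0,\kappa_0$ can therefore be fixed in terms of the perturbation size. You have filled in precisely those details (choice of $a_0$ via the embedding constant and $\mathrm{dist}(\bv,\R\setminus\cU_r)$, uniform $m_0,M_0$, and monotonicity of the life span in the data size), so there is nothing to add.
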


\begin{corollary}[a priori estimate]
\label{thaprest}
Under the hypothyeses of Theorem \ref{thmlocale}, let $(v+ \bv, u+ \bu)$ be the solution to Cauchy problem for system \eqref{PertSys} with initial data $v(0) = v_0+\bv$, $u(0)=u_0+ \bu$. Then there exists a constant $0 < a_{2}\ll 1 $, such that if 
$$  \sup_{0 \leq t \leq T} ( \Vert v(t) \Vert_{s+1}^{2} + \Vert u(t) \Vert_{s}^{2} )^{1/2} \leq a_{2},  $$
then
\begin{equation}\label{aprest}
\vertiii{U}_{s,T} \leq C_{2} \left(  \Vert v_{0} \Vert_{s+1}^{2} + \Vert u_{0} \Vert_{s}^2 \right)^{1/2}.  
\end{equation} 
Here $C_{2}= C_{2}(a_{2})$ is independent of $T$.
\end{corollary}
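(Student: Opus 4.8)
The plan is to prove Corollary \ref{thaprest} by the standard nonlinear energy method for the perturbation system \eqref{PertSys}, exploiting its genuinely coupled viscous--capillary structure and closing the estimate against the a priori bound $\sup_{[0,T]}\big(\|v(t)\|_{s+1}^2+\|u(t)\|_s^2\big)^{1/2}\le a_2$. The role of that bound is twofold: by the Sobolev embedding $H^{s+1}(\R)\hookrightarrow L^\infty(\R)$ (recall $s\ge 3$), choosing $a_2$ small keeps $\bv+v(\cdot,t)\in\cU_r$ with $\tfrac12 m_0\le\bv+v\le 2M_0$ throughout $[0,T]$, so that the coefficients are coercive, $-p'(\bv+v)\ge c_0>0$, $\mu(\bv+v)/(\bv+v)\ge c_0>0$, $\kappa(\bv+v)\ge c_0>0$ uniformly; and it renders every nonlinear commutator term a small multiple, $\le C a_2$, of the dissipation $D(t):=\|v_x(t)\|_{s+1}^2+\|u_x(t)\|_s^2$. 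I denote by $E(t)\simeq\|v(t)\|_{s+1}^2+\|u(t)\|_s^2$ the energy functional to be built.

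First I would establish the basic ($L^2$-level) identity. Multiplying the first equation of \eqref{PertSys} by $-\big(p(\bv+v)-p(\bv)\big)$, the second by $u$, adding and integrating by parts, the pressure contributions cancel against a convex potential $\Psi$ with $\Psi'(v)=-\big(p(\bv+v)-p(\bv)\big)$ and $\Psi(v)\simeq v^2$ near $v=0$; using $u_x=v_t$, the capillary flux produces exactly $\tfrac{d}{dt}\int\tfrac12\kappa(\bv+v)v_x^2$, all cubic remainders cancelling, and the viscous term gives the dissipation. The outcome is the entropy identity
\[
\frac{d}{dt}\int_{\R}\Big(\tfrac12 u^2+\Psi(v)+\tfrac12\kappa(\bv+v)v_x^2\Big)\,dx+\int_{\R}\frac{\mu(\bv+v)}{\bv+v}\,u_x^2\,dx=0,
\]
which controls $u_x$ but not $v_x$. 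To recover $v_x$ and $v_{xx}$ dissipation I would add the Kawashima-type cross term: using $v_{xt}=u_{xx}$ and $u_t=-p'(\bv+v)v_x+\big(\tfrac{\mu(\bv+v)}{\bv+v}u_x\big)_x-\big(\kappa(\bv+v)v_{xx}+\tfrac12\kappa'(\bv+v)v_x^2\big)_x$, integration by parts gives
\[
\frac{d}{dt}\int_{\R}u v_x\,dx=-\int_{\R}u_x^2\,dx+\int_{\R}\big(-p'(\bv+v)\big)v_x^2\,dx+\int_{\R}\kappa(\bv+v)v_{xx}^2\,dx+R(t),
\]
where $R(t)=-\int_{\R}\tfrac{\mu(\bv+v)}{\bv+v}u_x v_{xx}\,dx+\int_{\R}\tfrac12\kappa'(\bv+v)v_x^2v_{xx}\,dx$; the first integral is absorbed by Young's inequality into the $v_{xx}$ and $u_x$ integrals and the second is cubic, $\le Ca_2 D(t)$. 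Therefore $E_0:=\int_{\R}\big(\tfrac12u^2+\Psi(v)+\tfrac12\kappa(\bv+v)v_x^2\big)\,dx-\delta\int_{\R}u v_x\,dx$, for a small fixed $\delta>0$, satisfies $E_0\simeq\|u\|_0^2+\|v\|_1^2$ and $\tfrac{d}{dt}E_0+c\big(\|u_x\|_0^2+\|v_x\|_0^2+\|v_{xx}\|_0^2\big)\le Ca_2 D(t)$.

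Next I would run the analogous energy and cross-term estimates on the $\partial_x^k$-differentiated system over the appropriate ranges of $k$, keeping careful track of the derivative count so that the $v$-dissipation reaches one order above the $v$-energy --- this is what the third-order capillary term makes possible and is what matches the dissipation norm in \eqref{tripnorm}. The commutators generated when $\partial_x^k$ hits the $v$-dependent coefficients are estimated by Moser and Gagliardo--Nirenberg inequalities and, under the a priori bound, are dominated by $Ca_2 D(t)$, with the genuinely top-order contributions absorbed into the capillary dissipation. Summing the basic and higher-order estimates with suitably small relative weights yields $E(t)\simeq\|v(t)\|_{s+1}^2+\|u(t)\|_s^2$, $D(t)\simeq\|v_x(t)\|_{s+1}^2+\|u_x(t)\|_s^2$, and
\[
\frac{d}{dt}E(t)+c_1 D(t)\le Ca_2\,D(t).
\]
Fixing $a_2$ so small that $Ca_2\le\tfrac12 c_1$ gives $\tfrac{d}{dt}E+\tfrac12 c_1 D\le 0$; integrating on $[0,T]$ and taking the supremum in $t$ yields $\sup_{[0,T]}E(t)+\int_0^T D(t)\,dt\le CE(0)$. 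Since the constants relating $E$ to $\|v\|_{s+1}^2+\|u\|_s^2$ depend only on $m_0,M_0$ (hence only on $a_2$) and $E(0)\le C\big(\|v_0\|_{s+1}^2+\|u_0\|_s^2\big)$, this is exactly \eqref{aprest} with $C_2=C_2(a_2)$ independent of $T$.

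The step I expect to be the main obstacle is the cross-term (``compensating'') construction upgrading the bound from $u_x$-dissipation alone to full $(v_x,v_{xx},\dots)$-dissipation at every order --- in particular the highest-order terms, where one needs $v$-dissipation one derivative above the energy level without losing derivatives and with absorbing constants that stay consistent across all orders. This is the nonlinear counterpart of the genuine coupling of \eqref{SysLagi} and of the skew-symmetric, Fourier-uniformly bounded compensating matrix symbol constructed in Lemma \ref{lemourK} and used in Lemma \ref{lembee}; once that is in place, the remaining commutator bookkeeping and the check that all error terms carry a factor $a_2$ are routine.
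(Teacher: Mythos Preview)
Your outline is correct and is, in substance, the standard nonlinear energy argument for the one-dimensional Navier--Stokes--Korteweg system: the relative-entropy identity (with the exact cancellation in the capillary contribution you noted), the cross term $\int u\,v_x$ that upgrades the dissipation from $u_x$ alone to $(v_x,v_{xx})$, and then the $\partial_x^k$-differentiated versions closed by Moser-type commutator bounds under the smallness hypothesis. The derivative bookkeeping you describe is right: running the entropy identity for $k=0,\dots,s$ builds the energy $\|u\|_s^2+\|v\|_{s+1}^2$ and the $u_x$-dissipation $\|u_x\|_s^2$, while the cross terms at the same levels supply $\|v_x\|_{s+1}^2$; this matches exactly the norm $\vertiii{\cdot}_{s,T}$ in \eqref{tripnorm}.

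The difference with the paper is that the paper gives no argument at all: its entire proof of Corollary \ref{thaprest} is a one-line reference to the local existence theorem and to the proof of Theorem 2.2 in Hattori--Li \cite{HaLi96a}. In other words, the paper treats the a priori estimate as already embedded in the local well-posedness machinery and simply cites the source. What you have sketched is precisely the kind of direct energy computation that the cited references carry out; your version is self-contained and makes explicit the role of the capillary term in gaining the extra derivative on $v$, as well as the nonlinear analogue of the compensating-matrix mechanism from Lemma \ref{lemourK}. So your route is not different in spirit from the literature the paper invokes, but it is considerably more informative than the paper's own proof-by-citation.
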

\begin{proof}
Follows from the local existence result, Theorem \ref{thmlocalepert} (see, e.g., the proof of Theorem 2.2 in \cite{HaLi96a}).
\end{proof}
%
%
%
%
%
\begin{remark}
Theorem \ref{thmlocale} is essentially the local well-posedness result in \cite{CCD15} (see Proposition 2.1) which is, in turn, based on the local analysis of Hattori and Li \cite{HaLi94}. The latter is an application of the contraction mapping principle and \emph{a priori} estimates. Even though the results in \cite{CCD15} and \cite{HaLi94} are expressed in terms of physical coefficients of algebraic type of the form \eqref{physcoeffalg}, the analysis can be carried out under the more general hypothesis \hyperref[H2]{(H$_2$)} at the expense of extra bookkeeping (for details, see \cite{ValTh20}). 
\end{remark}
%
%
%
%
%
%
%

\section{The genuine coupling condition}
\label{secgencoup}

In this section we examine the genuine coupling condition for system \eqref{PertSys} within the framework of the extension to viscous-dispersive systems due to Humpherys \cite{Hu05} of the strict dissipativity condition by Kawashima and Shizuta \cite{KaSh88a,ShKa85}. We show that this dissipative structure implies, in turn, the linear decay of solutions to the linearized problem.

\subsection{Linearization and symbol symmetrizability}
\label{linzedpob}
Let $\bU := (\bv, \bu)^\top \in \cU_r \times \R$ be a constant equilibrium state for some fixed $1 \leq r \leq N$ and consider the nonlinear system \eqref{PertSys} for perturbations $U = (v,u)^\top$. Expanding the nonlinear terms
%
we recast system \eqref{PertSys} as
\begin{equation}
\label{nlop}
U_t = \cL U + \partial_x H,
\end{equation}
where $\cL$ is the linearized operator around $\bU$ given by
\begin{equation}
\label{linop}
\cL := \sum_{j=1}^3 \bL_j \partial_x^j,
\end{equation}
with constant coefficient matrices, $\bL_j \in \R^{2 \times 2}$, namely
\[
\bL_1 := \begin{pmatrix} 0 & 1 \\ \bq & 0
\end{pmatrix}, \qquad 
\bL_2 := \begin{pmatrix} 0 & 0 \\ 0 & \bmu / \bv
\end{pmatrix}, \qquad 
\bL_3 := - \, \begin{pmatrix} 0 & 0 \\ \bkp & 0
\end{pmatrix},
\]
and where $\bkp := \kappa(\bv)$, $\bmu := \mu(\bv)$ and $\bq := - p'(\bv)$ are positive constants. The nonlinear terms have the form,
\[
H(U,U_x,U_{xx}) = \begin{pmatrix} 0 \\ H_2(U,U_x,U_{xx})
\end{pmatrix},
\]
with
\begin{align}
H_2(U, U_x, U_{xx}) &:= -(p(\bv + v) - p(\bv) - p'(\bv)v) +  \left( \frac{\mu(\bv+v)}{\bv+v} u_x - \frac{\mu(\bv)}{\bv} u_{x} \right) + \nonumber\\ &\qquad + (\kappa(\bv+v)v_{xx} - \kappa(\bv) v_{xx}) + \tfrac{1}{2} \kappa'(\bv + v) v_x^2 \nonumber\\
&= O(v^2 + |v||u_x| + |v||v_{xx}| + v_x^2).\label{orderH2}
\end{align}
It is to be observed that the nonlinear terms have divergence form.

Let us study the linearized system around $\bU$, namely,
\begin{equation}
\label{ccoeffs}
U_t = \cL U = \sum_{j=1}^3 \bL_j \partial_x^j U.
\end{equation}
Take the Fourier transform of the constant coefficients equation \eqref{ccoeffs} to obtain
\begin{equation}
\label{fouriersys}
\hU_t = \big( i \xi \bL_1 - \xi^2 \bL_2 - i \xi^3 \bL_3 \big) \hU, \qquad \xi \in \R.
\end{equation}

Following Humpherys \cite{Hu05}, let us split the symbol into odd and even terms, and define
\begin{equation}
\label{exprAB}
\begin{aligned}
A(\xi) &:= - \sum_{j \text{ odd}} (i\xi)^{j-1} \bL_j = - \bL_1 + \xi^2 \bL_3 = \begin{pmatrix} 0 & -1 \\ - (\bq + \xi^2 \bkp) & 0\end{pmatrix},\\
B(\xi) &:= - \sum_{j \text{ even}} (-1)^{j/2} \xi^j \bL_j = \xi^2 \bL_2 = \xi^2 \begin{pmatrix} 0 & 0 \\ 0 & \bmu / \bv \end{pmatrix} =: \xi^2 \bB,
\end{aligned}
\end{equation}
with $\bB \geq 0$ constant matrix, positive semi-definite. Therefore, the Fourier equation is recast as
\begin{equation}
\label{fouriersys2}
\hU_t + i \xi A(\xi) \hU + B(\xi) \hU = 0.
\end{equation}
Hence, the evolution of solutions to the linear system \eqref{ccoeffs} is determined by the following family of eigenvalue problems parametrized by $\xi$ and associated to \eqref{fouriersys2}, 
\begin{equation}
\label{evaluesys}
\lambda \hU = - \big(i \xi A(\xi) + B(\xi)\big) \hU.
\end{equation}

A constant coefficient system of the form \eqref{ccoeffs} is symmetrizable in the classical sense of Friedrichs \cite{Frd54} if, for any fixed but arbitrary state $\bU$, there exists a symmetric, positive definite matrix $\bA_{0} = \bA_0(\bU)$ such that $\bA_{0}\bL_j$ is symmetric for all $j$. Humpherys \cite{Hu05} observed that the (linearized) Korteweg system \eqref{ccoeffs} is not Friedrichs symmetrizable, but it can be symmetrized in the following sense.
\begin{definition}[symbol symmetrizability \cite{Hu05}]
\label{defsymH}
The operator $\cL$ is symmetrizable if there exists a smooth, symmetric matrix-valued function $A_{0}=A_{0}(\xi)>0$, positive-definitive, such that both $A_{0}(\xi)A(\xi)$ and $A_{0}(\xi)B(\xi)$ are symmetric and $A_{0}(\xi)B(\xi)$ is positive semi-definite, $A_{0}(\xi)B(\xi) \geq 0$, for all $\xi \in \R$.
\end{definition}

\begin{lemma}
\label{lemHumsymm}
The linear Korteweg operator $\cL$ in \eqref{linop} is symbol symmetrizable but not Friedrichs symmetrizable. Moreover, every symbol symmetrizer has the form
\begin{equation}
\label{gensymm}
A_0(\xi) = \begin{pmatrix} a(\xi) & 0 \\ 0 & a(\xi) \beta(\xi)^{-1} \end{pmatrix} \in C^\infty(\R;\R^{2 \times 2}), 
\end{equation}
where
\begin{equation}
\label{defbeta}
\beta(\xi) := \bq + \xi^2 \bkp > 0, \qquad \xi \in \R,
\end{equation}
and $a = a(\xi) \in C^\infty(\R)$ is any smooth positive function uniformly bounded below, $a(\xi) \geq a_0 > 0$ for all $\xi \in \R$.
\end{lemma}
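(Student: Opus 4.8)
The plan is to reduce the symmetrizability conditions of Definition \ref{defsymH} to a small linear system for the entries of $A_0(\xi)$ and solve it explicitly. I would start by writing a general symmetric symbol as $A_0(\xi) = \left(\begin{smallmatrix} a & b \\ b & c\end{smallmatrix}\right)$ (all entries smooth functions of $\xi$), substitute the explicit forms of $A(\xi)$ and $B(\xi)=\xi^2\bB$ from \eqref{exprAB}, and compute the two products
\[
A_0(\xi)A(\xi) = \begin{pmatrix} -b\beta & -a \\ -c\beta & -b \end{pmatrix}, \qquad
A_0(\xi)B(\xi) = \xi^2 \begin{pmatrix} 0 & b\,\bmu/\bv \\ 0 & c\,\bmu/\bv \end{pmatrix},
\]
with $\beta=\beta(\xi)=\bq+\xi^2\bkp$ as in \eqref{defbeta}. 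Symmetry of $A_0A$ then forces $a=c\beta$, i.e. $c=a\beta^{-1}$; symmetry of $A_0B$ forces $\xi^2\,b\,\bmu/\bv=0$ for all $\xi$, and since $\bmu/\bv>0$ this gives $b(\xi)=0$ for every $\xi\neq 0$, hence $b\equiv 0$ by continuity. This already pins down the form \eqref{gensymm}.

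Next I would verify that every matrix of the form \eqref{gensymm} is indeed a symbol symmetrizer, which is immediate: since $\beta(\xi)>0$, the matrix $A_0(\xi)=\mathrm{diag}(a,a\beta^{-1})$ is symmetric and positive definite as soon as $a(\xi)>0$; $A_0A$ is symmetric by the computation above; and $A_0B = \xi^2 a\beta^{-1}(\bmu/\bv)\,\mathrm{diag}(0,1)$ is symmetric and positive semidefinite because $a,\beta,\bmu,\bv>0$. Smoothness of $A_0$ is equivalent to smoothness of $a$, as $\beta$ is smooth and nowhere vanishing. The uniform lower bound $a(\xi)\geq a_0>0$ is not needed for Definition \ref{defsymH} itself — pointwise positivity suffices there — but I would state it because it makes $A_0$ uniformly positive definite on $\R$, which is what is required later to close the energy estimates; any such $a$ is admissible, and this is exactly the extra degree of freedom used in Lemma \ref{lemourK}.

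Finally, for the non-Friedrichs part, I would argue by contradiction: if a constant symmetric positive definite $\bA_0=\left(\begin{smallmatrix}\alpha & \gamma\\ \gamma & \delta\end{smallmatrix}\right)$ made all $\bA_0\bL_j$ symmetric, then symmetry of $\bA_0\bL_2=(\bmu/\bv)\left(\begin{smallmatrix}0 & \gamma\\ 0 & \delta\end{smallmatrix}\right)$ forces $\gamma=0$, and symmetry of $\bA_0\bL_3=-\bkp\left(\begin{smallmatrix}\gamma & 0\\ \delta & 0\end{smallmatrix}\right)$ forces $\delta\bkp=0$, hence $\delta=0$ since $\bkp=\kappa(\bv)>0$; but then $\bA_0=\mathrm{diag}(\alpha,0)$ is singular, contradicting positive definiteness. (The $\bL_1$ condition, which reads $\alpha=\delta\bq$, is then also violated but is not needed.) The computations here are elementary linear algebra and there is no serious obstacle; the only points deserving care are the passage from $\xi^2 b(\xi)=0$ to $b\equiv 0$ via continuity, and keeping track of the difference between the pointwise positive-definiteness in Definition \ref{defsymH} and the uniform positivity $a\geq a_0$ that we additionally impose for later use.
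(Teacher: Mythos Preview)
Your proposal is correct and follows essentially the same approach as the paper: a direct computation of $A_0A$ and $A_0B$ to determine the constraints on the entries of $A_0$, followed by a term-by-term contradiction argument for the Friedrichs case. Your write-up is in fact more explicit than the paper's (which relegates both the characterization of all symmetrizers and the non-Friedrichs claim to ``direct inspection''), and your remark distinguishing the pointwise positivity required by Definition~\ref{defsymH} from the uniform lower bound $a\geq a_0$ imposed for later use is a helpful clarification.
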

\begin{proof}
Follows from direct calculations that yield
\[
A_0(\xi) A(\xi) = \begin{pmatrix} a(\xi) & 0 \\ 0 & a(\xi) \beta(\xi)^{-1} \end{pmatrix} \begin{pmatrix} 0 & -1 \\ -\beta(\xi) & 0 \end{pmatrix} = -a(\xi) \begin{pmatrix} 0 & 1 \\ 1 & 0 \end{pmatrix},
\]
clearly symmetric, and
\[
A_0(\xi) B(\xi) = \begin{pmatrix} a(\xi) & 0 \\ 0 & a(\xi) \beta(\xi)^{-1} \end{pmatrix} \begin{pmatrix} 0 & 0 \\ 0 & \xi^2 \bmu/\bv \end{pmatrix} = \xi^2 \begin{pmatrix} 0 & 0 \\ 0 & a(\xi) \beta(\xi)^{-1} \bmu/\bv \end{pmatrix} \geq 0,
\]
symmetric and positive semi-definite. By definition, $A_0$ is smooth, symmetric and positive definite. That $\cL$ is not symmetrizable in the sense of Friedrichs follows by inspection. Indeed, if
\[
\bA_0 = \begin{pmatrix} a_1 & a_2 \\ a_2 & a_3 \end{pmatrix},
\]
is a (Friedrichs) symmetrizer then the simultaneous symmetrizability of $\bL_j$, $j = 1,2,3$, and $\bq, \bkp, \bmu, \bv > 0$ yield $\bA_0 \equiv 0$, a contradiction, as the reader may easily verify. Likewise, any other symbol symmetrizer $A_0(\xi)$ must be of the form \eqref{gensymm}, again, by direct inspection.
\end{proof}

\begin{remark}
Clearly, every symmetrizable system in the sense of Friedrichs is symbol symmetrizable, but the converse is not true. The linear Korteweg operator \eqref{linop} is a simple and physically relevant counterexample that exhibits the importance of Definition \ref{defsymH}.
\end{remark}

In view of Lemma \ref{lemHumsymm} we multiply system \eqref{fouriersys2} on the left by any symbol symmetrizer in the family to obtain the following symmetric system in the Fourier space,
\[
A_0(\xi) \hU_t + i \xi A_0(\xi) A(\xi) \hU  + \xi^2 A_0(\xi) \bB \hU = 0.
\]
In this fashion, even though the linear system \eqref{ccoeffs} is not symmetrizable, its symbol can be written in symmetric form.

\subsection{Genuine coupling and the equivalence theorem}
Once the system is symmetrized in the sense of Humpherys, one may ask whether there exists a compensating matrix for it. 
\begin{definition}[compensating matrix]
\label{ComMat}
Let $A_{0}$, $A$, $B \in C^{\infty} \left( \R; \R^{2 \times 2} \right)$ be smooth real matrix-valued functions of $\xi \in \R$. Assume that $A_{0},$ $A$, $B$ are symmetric for all $\xi \in \R$, $A_{0} >0$ is positive definite and $B \geq 0$ (positive semi-definite). A real matrix valued function  $K\in C^{\infty} \left( \R; \R^{2 \times 2} \right)$ is said to be a \emph{compensating matrix function} for the triplet $(A_{0}, A, B)$ if 
\begin{itemize}
\item[(a)] $K(\xi)A_{0}(\xi)$ is skew-symmetric for all $\xi\in \R$; and,
\item[(b)] $\left[K(\xi)A(\xi)\right]^{s}+ B(\xi) \geq \theta(\xi) I > 0$ for all $\xi \in \R$, $\xi \neq 0$, some $\theta = \theta(\xi) > 0$.
\end{itemize}
Here $[M]^{s} := \frac{1}{2}(M+M^\top)$ denotes the symmetric part of any real matrix $M$. 
\end{definition}

\begin{definition}
\label{defstrictdiss}
Consider the Fourier system \eqref{fouriersys2} and its associated eigenvalue problem \eqref{evaluesys}.
\begin{itemize}
\item[(i)]$\mathcal{L}$ is called \emph{strictly dissipative} if for each $\xi \neq 0$ we have that all solutions to \eqref{evaluesys} satisfy $\Re  \lambda (\xi) < 0$.
\item[(ii)]$\mathcal{L}$ is said to satisfy the \emph{genuine coupling condition} if no eigenvalue of $A(\xi)$ lies in $\ker B(\xi)$ for all $\xi \neq 0$. 
\end{itemize}
\end{definition}

Following the seminal ideas of Kawashima and Shizuta \cite{KaSh88a,ShKa85} in the context of hyperbolic-parabolic systems, Humpherys \cite{Hu05} extended to higher-order, viscous-dispersive systems (like the Korteweg model under consideration) the notions of symmetrizability, genuine coupling and strict dissipativity, culminating into the following
\begin{theorem}[equivalence theorem \cite{Hu05}]
\label{HuThSym}
Suppose that there exists is a symbol symmetrizer, $A_0 = A_{0}(\xi)$, for the operator $\mathcal{L}$ in the sense of Definition \ref{defsymH}, and that $A_{0}(\xi)A(\xi)$ is of constant multiplicity in $\xi$. Then the following conditions are equivalent:
\begin{itemize}
\item[(i)] $\mathcal{L}$ is strictly dissipative.
\item[(ii)] $\mathcal{L}$ is genuinely coupled.
\item[(iii)] There exists a compensating matrix function for the triplet $(A_0,A_0A, A_0B)$.
\end{itemize}
\end{theorem}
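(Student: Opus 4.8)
The plan is to argue pointwise in the Fourier parameter $\xi\neq 0$, reducing everything to the symmetric form supplied by Lemma~\ref{lemHumsymm}. Fix $\xi\neq 0$, choose a symbol symmetrizer $A_0=A_0(\xi)$ of the form \eqref{gensymm}, and set $\tiA:=A_0A$ and $\tiB:=A_0B$; both are symmetric and $\tiB\geq 0$. Since $A_0>0$ and $A_0A$ is symmetric, $A(\xi)$ is self-adjoint for the inner product $\<A_0\,\cdot\,,\cdot\>$, so it has real eigenvalues, and, by the constant-multiplicity hypothesis, its eigenvalues and spectral projections are smooth in $\xi$. Multiplying the eigenvalue problem \eqref{evaluesys} on the left by $A_0$ and pairing with an eigenvector $\phi\neq 0$ yields
\begin{equation}\label{pf-key}
\lambda\,\<A_0\phi,\phi\> + i\xi\,\<\tiA\phi,\phi\> + \<\tiB\phi,\phi\> = 0,
\end{equation}
where $\<A_0\phi,\phi\> > 0$, $\<\tiA\phi,\phi\>\in\R$ and $\<\tiB\phi,\phi\>\geq 0$. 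Taking real parts, $(\Re\lambda)\<A_0\phi,\phi\> = -\<\tiB\phi,\phi\>\leq 0$, so $\Re\lambda\leq 0$ always; moreover $\Re\lambda=0$ forces $\<\tiB\phi,\phi\>=0$, hence $B(\xi)\phi=0$ (because $\tiB\geq 0$ and $A_0$ is invertible), and then \eqref{evaluesys} reduces to $i\xi A(\xi)\phi=-\lambda\phi$, so $\phi$ is an eigenvector of $A(\xi)$ lying in $\ker B(\xi)$.

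From this, (i) $\Leftrightarrow$ (ii) is immediate. If $\cL$ is not strictly dissipative, then (since $\Re\lambda\leq 0$) some eigenvalue satisfies $\Re\lambda=0$, producing an eigenvector of $A(\xi)$ in $\ker B(\xi)$: genuine coupling fails. Conversely, if genuine coupling fails, say $A(\xi)\phi=\alpha\phi$ with $\alpha\in\R$ and $B(\xi)\phi=0$, then $\lambda:=-i\xi\alpha$ solves \eqref{evaluesys} with $\Re\lambda=0$ and $\cL$ is not strictly dissipative. The implication (iii) $\Rightarrow$ (i) is equally short: if $K=K(\xi)$ is a compensating matrix for $(A_0,\tiA,\tiB)$ and some eigenpair had $\Re\lambda=0$, then by the above $B(\xi)\phi=0$ and $A(\xi)\phi=\alpha\phi$ with $\alpha\in\R$, whence $\<\tiB\phi,\phi\>=\phi^*A_0B(\xi)\phi=0$ and $\<[K\tiA]^s\phi,\phi\>=\Re\<KA_0A(\xi)\phi,\phi\>=\alpha\,\Re\<KA_0\phi,\phi\>=0$, because $KA_0$ is real and skew-symmetric so $\<KA_0\phi,\phi\>$ is purely imaginary; thus $\<([K\tiA]^s+\tiB)\phi,\phi\>=0$, contradicting property (b) of Definition~\ref{ComMat}. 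Hence $\Re\lambda<0$ for all $\xi\neq 0$ and $\cL$ is strictly dissipative.

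It remains to establish (ii) $\Rightarrow$ (iii), which is the crux. I would fix $\xi\neq 0$ and use the constant-multiplicity hypothesis to pick a smooth real invertible $P=P(\xi)$ with $P^\top A_0P=I$ and $P^{-1}AP=D:=\mathrm{diag}(\alpha_1 I_{m_1},\dots,\alpha_\ell I_{m_\ell})$, the $\alpha_r(\xi)$ pairwise distinct and smooth, and write $\hat B:=P^\top\tiB P\geq 0$ in the induced block structure $\hat B=(\hat B_{rs})$. Since $\ker\hat B=P^{-1}\ker B(\xi)$ and the eigenvectors of $D$ are exactly the vectors supported in one block (for such a $v$ one has $v^*\hat B v=v^*\hat B_{rr}v$, and $\hat B\geq 0$ gives $\hat B v=0\Leftrightarrow\hat B_{rr}v=0$), genuine coupling is equivalent to each diagonal block $\hat B_{rr}(\xi)$ being positive definite. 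Following Kawashima and Shizuta, I would then set $\hat K=(\hat K_{rs})$ with $\hat K_{rr}:=0$ and $\hat K_{rs}:=2(\alpha_r-\alpha_s)^{-1}\hat B_{rs}$ for $r\neq s$; this $\hat K$ is skew-symmetric, and a direct block computation gives $[\hat K D]^s=-(\hat B-\hat B_{\mathrm{bd}})$, where $\hat B_{\mathrm{bd}}$ is the block-diagonal part of $\hat B$, so that $[\hat K D]^s+\hat B=\hat B_{\mathrm{bd}}\geq\theta(\xi)I>0$ with $\theta(\xi):=\min_r\lambda_{\min}(\hat B_{rr}(\xi))$. Putting $K:=P^{-\top}\hat K P^\top$, one checks that $KA_0=P^{-\top}\hat K P^{-1}$ is skew-symmetric, that $[K\tiA]^s+\tiB=P^{-\top}\big([\hat K D]^s+\hat B\big)P^{-1}\geq\theta(\xi)A_0\geq\tilde\theta(\xi)I>0$, and that $K$ is smooth in $\xi$ since $P$, $D$ and $\hat B$ are. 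This provides the required compensating matrix function for $(A_0,A_0A,A_0B)$, and combining the three implications closes the loop.

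The main obstacle is this last construction: the simultaneous and smooth diagonalization of $A(\xi)$ and $A_0(\xi)$ (where the constant-multiplicity hypothesis enters), the identification of genuine coupling with positive definiteness of the diagonal blocks of the transformed dissipation $\hat B$, and the verification that the Kawashima--Shizuta skew ansatz yields property (b) after transport back to the original frame. For the Korteweg operator \eqref{linop} all of this is elementary, since by the proof of Lemma~\ref{lemHumsymm} one has $A_0A=-a(\xi)\begin{pmatrix}0&1\\1&0\end{pmatrix}$ with the two simple eigenvalues $\pm a(\xi)$, so the constant-multiplicity hypothesis holds automatically and the resulting compensating matrix is the explicit one of Lemma~\ref{lemourK}.
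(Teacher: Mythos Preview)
The paper does not itself prove Theorem~\ref{HuThSym}; it is quoted from Humpherys~\cite{Hu05} and only the statement together with a brief remark appear, so there is no in-text proof to compare against.

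That said, your argument is essentially correct and follows the standard Kawashima--Shizuta construction, carried out pointwise in $\xi$ exactly as in Humpherys' extension. The equivalences (i)$\Leftrightarrow$(ii) and (iii)$\Rightarrow$(i) via the quadratic-form identity obtained by pairing $A_0$ times \eqref{evaluesys} with an eigenvector are clean; for (iii)$\Rightarrow$(i) your use of the skew-symmetry of $KA_0$ to kill $\Re\langle KA_0\phi,\phi\rangle$ is the right mechanism. For (ii)$\Rightarrow$(iii) your simultaneous reduction $P^\top A_0P=I$, $P^{-1}AP=D$ (available because $A$ is self-adjoint in the $A_0$-inner product, with smooth spectral data by constant multiplicity), the identification of genuine coupling with positivity of the diagonal blocks $\hat B_{rr}$, and the skew ansatz $\hat K_{rs}=2(\alpha_r-\alpha_s)^{-1}\hat B_{rs}$ are exactly the classical route; the transport $K=P^{-\top}\hat K P^\top$ correctly gives $KA_0=P^{-\top}\hat K P^{-1}$ skew and $[K\tiA]^s+\tiB=P^{-\top}\hat B_{\mathrm{bd}}P^{-1}\geq\theta(\xi)A_0$. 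One cosmetic point: in your last paragraph the eigenvalues of $A_0A=-a(\xi)\bigl(\begin{smallmatrix}0&1\\1&0\end{smallmatrix}\bigr)$ are indeed $\pm a(\xi)$, not $\pm a(\xi)^{1/2}$ as stated in the paper's Lemma~\ref{lemgencoup}; either way they are simple and the constant-multiplicity hypothesis holds.
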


\begin{remark}
The equivalence theorem (see Theorem 3.3 or Theorem 6.3 in \cite{Hu05}) holds for any linearized system of arbitrary order. It essentially links the strict dissipativity of the linearized operator (a property expressed in terms of the essential spectrum of the symbol of the operator) with the existence of a compensating matrix symbol which may be useful to prove energy estimates. The difference with previous results (which pertain to second order systems that are symmetrizable in the sense of Friedrichs) is that now symmetrizability involves a symbol and that the compensating matrix depends on the Fourier wave numbers as well. 
\end{remark}

\begin{lemma}
\label{lemgencoup}
The linearized Korteweg operator $\cL$ in \eqref{linop} satisfies the genuine coupling condition. Moreover, for any symbol symmetrizer of the form \eqref{gensymm}, the matrix symbol $A_0(\xi) A(\xi)$ is of constant multiplicity in $\xi$.
\end{lemma}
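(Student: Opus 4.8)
The plan is to establish both claims by direct inspection of the explicit $2\times 2$ symbols $A(\xi)$ and $B(\xi)$ displayed in \eqref{exprAB}; nothing beyond elementary linear algebra is needed. For the genuine coupling condition I would first determine the spectral data of $A(\xi) = \left(\begin{smallmatrix} 0 & -1 \\ -\beta(\xi) & 0\end{smallmatrix}\right)$: its characteristic polynomial is $\lambda^2 - \beta(\xi)$, so for each $\xi$ the eigenvalues are $\lambda_\pm(\xi) = \pm\sqrt{\beta(\xi)}$, which are real, distinct and nonzero because $\beta(\xi) = \bq + \xi^2\bkp \geq \bq > 0$ by hypothesis \hyperref[H2]{(H$_2$)} and the definition $\bq = -p'(\bv) > 0$; the corresponding eigenvectors are $w_\pm(\xi) = (1, \mp\sqrt{\beta(\xi)})^\top$. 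On the other hand, for $\xi \neq 0$ one has $B(\xi) = \xi^2 \bB$ with $\bB = \mathrm{diag}(0, \bmu/\bv)$ and $\bmu, \bv > 0$, so that $\ker B(\xi) = \Span\{(1,0)^\top\}$. Since $\sqrt{\beta(\xi)} \neq 0$, neither $w_+(\xi)$ nor $w_-(\xi)$ is a scalar multiple of $(1,0)^\top$; hence no eigenvector of $A(\xi)$ lies in $\ker B(\xi)$ for any $\xi \neq 0$, which is precisely the genuine coupling condition of Definition \ref{defstrictdiss}(ii).

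For the constant multiplicity of $A_0(\xi)A(\xi)$ I would reuse the identity already obtained in the proof of Lemma \ref{lemHumsymm}, namely
\[
A_0(\xi)A(\xi) = -a(\xi)\begin{pmatrix} 0 & 1 \\ 1 & 0 \end{pmatrix}, \qquad \xi \in \R .
\]
The matrix $\left(\begin{smallmatrix}0&1\\1&0\end{smallmatrix}\right)$ has eigenvalues $\pm 1$, hence $A_0(\xi)A(\xi)$ has eigenvalues $\mp a(\xi)$, and since $a(\xi) \geq a_0 > 0$ these are two distinct, simple eigenvalues for \emph{every} $\xi \in \R$. Consequently the number of distinct eigenvalues of $A_0(\xi)A(\xi)$ and their multiplicities are independent of $\xi$, i.e., $A_0(\xi)A(\xi)$ is of constant multiplicity in $\xi$; in particular the equivalence Theorem \ref{HuThSym} applies to the triplet $(A_0, A_0A, A_0B)$.

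I do not anticipate a genuine obstacle here, since everything reduces to the spectrum of an explicit $2\times 2$ symbol. The only points requiring a little care are the role of the frequency $\xi = 0$, where the dissipation $B$ degenerates and genuine coupling fails (which is why strict dissipativity and genuine coupling are only required for $\xi\neq 0$), and the fact that $\beta(\xi)$ stays bounded away from zero uniformly in $\xi$, which is what keeps the eigenvectors of $A(\xi)$ off $\ker B(\xi)$ for all frequencies at once.
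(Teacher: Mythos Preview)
Your proof is correct and follows essentially the same elementary verification as the paper. There is one minor difference worth noting: for the genuine coupling part you compute the eigenvectors of $A(\xi)$ explicitly and check that none lie in $\ker B(\xi)$, exactly as Definition~\ref{defstrictdiss}(ii) is phrased; the paper instead checks that the single spanning vector $(1,0)^\top$ of $\ker A_0(\xi)B(\xi)$ is not an eigenvector of the symmetrized symbol $A_0(\xi)A(\xi)$, by computing its image. Both arguments are trivially equivalent here because $A_0$ is invertible (so $\ker A_0B=\ker B$) and the kernel is one-dimensional. For the constant multiplicity part you obtain the eigenvalues $\pm a(\xi)$ of $A_0(\xi)A(\xi)$, which is the correct computation (the paper records $\pm a(\xi)^{1/2}$, an apparent typo, but the conclusion is the same since $a(\xi)\geq a_0>0$).
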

\begin{proof}
Follows directly from the expressions \eqref{exprAB}, which yield $\ker A_{0}(\xi)B(\xi) = \ker A_{0}(\xi) \bB =\ker \bB = \text{span} \{ (1,0)^\top\}$, if $\xi \neq 0$, and
\[
A_{0}(\xi)A(\xi) \begin{pmatrix} 1 \\ 0 \end{pmatrix} = -a(\xi) \begin{pmatrix} 0 \\ 1 \end{pmatrix} \neq \zeta(\xi) \begin{pmatrix} 1 \\ 0 \end{pmatrix},
\]
for any eigenvalue $\zeta(\xi)$ of $A(\xi)$. Hence, no eigenvector of $A_{0}(\xi)A(\xi)$ lies in $\ker A_{0}(\xi) B(\xi)$ for $\xi \neq 0$.

To prove the second assertion, it is to be observed that, by elementary computations, the eigenvalues of $A_0(\xi) A(\xi)$ are given by $\tilde{\zeta}(\xi)_{1,2} = \pm a(\xi)^{1/2}$. Since $a(\xi) \geq a_0 > 0$ for all $\xi \in \R$, both eigenvalues are real and simple for each value of $\xi$. The lemma is proved.
\end{proof}

In view of Lemma \ref{lemgencoup}, the Korteweg system under consideration satisfies the hypotheses of the equivalence theorem and we may readily conclude the existence of a compensating matrix symbol for it. Since the system is of third order, however, in order to make it useful to perform energy estimates we need to choose it with a bit of extra care.

\section{Linear decay}
\label{seclineardecay}

In this section we establish the decay estimates for solutions to the linear Korteweg system \eqref{ccoeffs}. The main ingredient is the selection of the compensating matrix symbol. Thanks to the degree of freedom in choosing a symbol symmetrizer for the linear operator, we are able to find (by direct inspection) an appropriate compensating matrix function suitable for our needs. 

\subsection{The compensating matrix symbol}
\label{seccompfunc}

From Lemma \ref{lemHumsymm} let us choose the symbol symmetrizer with $a(\xi) \equiv \beta(\xi) \geq \bq >0 $, so that from this point on we define
\begin{equation}
\label{defA0}
A_0(\xi) := \begin{pmatrix} \beta(\xi) & 0 \\ 0 &  1 \end{pmatrix}.
\end{equation}
Let us introduce the following change of variables,
\begin{equation}
\label{varV}
\hV := A_0(\xi)^{1/2} \hU.
\end{equation}
Hence, equation \eqref{fouriersys2} is transformed into
\begin{equation}
\label{eqV}
\hV_t + i\xi \tiA(\xi) \hV + \tiB(\xi) \hV = 0,
\end{equation}
where
\[
\begin{aligned}
\tiA(\xi) := A_0(\xi)^{1/2} A(\xi) A_0(\xi)^{-1/2} &= \begin{pmatrix} \beta(\xi)^{1/2} & 0 \\ 0 &  1 \end{pmatrix} \begin{pmatrix} 0 & -1 \\ -\beta(\xi) & 0 \end{pmatrix} \begin{pmatrix} \beta(\xi)^{-1/2} & 0 \\ 0 &  1 \end{pmatrix}\\
&= \begin{pmatrix} 0 & -\beta(\xi)^{1/2} \\ -\beta(\xi)^{1/2} & 0 \end{pmatrix},
\end{aligned}
\]
is smooth and symmetric, and where
\[
\begin{aligned}
\tiB(\xi) := A_0(\xi)^{1/2} B(\xi) A_0(\xi)^{-1/2} &= \begin{pmatrix} \beta(\xi)^{1/2} & 0 \\ 0 & 1 \end{pmatrix} \begin{pmatrix} 0 & 0 \\ 0 & \xi^2 \bmu/\bv \end{pmatrix} \begin{pmatrix} \beta(\xi)^{-1/2} & 0 \\ 0 &  1 \end{pmatrix}\\
&= \xi^2 \begin{pmatrix} 0 & 0 \\ 0 & \bmu/\bv \end{pmatrix} \\&= \xi^2 \bB,
\end{aligned}
\]
is clearly symmetric and positive semi-definite.

The following lemma chooses appropriately the compensating matrix symbol for the system in the new coordinates.
\begin{lemma}
\label{lemourK}
There exists a smooth, skew-symmetric, real matrix valued function $\tiK = \tiK(\xi) \in C^\infty(\R; \R^{2 \times 2})$ such that it is a compensating matrix for the triplet $(I, \tiA(\xi), \bB)$. In other words, 
\begin{equation}
\label{compmatprop}
[\tiK(\xi) \tiA(\xi)]^s + \bB \geq \overline{\theta} I > 0, 
\end{equation}
for some uniform $\overline{\theta} > 0$ independent of $\xi \in \R$. Moreover, there exists a uniform constant $C > 0$ such that
\begin{equation}
\label{tiKbded}
| \tiK(\xi) | \leq C,
\end{equation}
for all $\xi \in \R$.
\end{lemma}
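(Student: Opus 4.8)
The plan is to take advantage of the transformed coordinates \eqref{varV}, in which the symbol symmetrizer has become the identity, so that requirement (a) of Definition \ref{ComMat} for the triplet $(I, \tiA(\xi), \bB)$ reduces to the plain skew-symmetry of $\tiK(\xi)$. Since every real skew-symmetric $2\times 2$ matrix is a scalar multiple of $\begin{pmatrix} 0 & 1 \\ -1 & 0 \end{pmatrix}$, I would search for
\begin{equation*}
\tiK(\xi) = k(\xi)\begin{pmatrix} 0 & 1 \\ -1 & 0 \end{pmatrix}, \qquad k \in C^\infty(\R),
\end{equation*}
and then pin down the scalar $k(\xi)$ from the coercivity requirement \eqref{compmatprop} together with the uniform bound \eqref{tiKbded}.

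The relevant computation is immediate from the explicit form of $\tiA(\xi)$ obtained above:
\begin{equation*}
\tiK(\xi)\tiA(\xi) = k(\xi)\begin{pmatrix} 0 & 1 \\ -1 & 0 \end{pmatrix}\begin{pmatrix} 0 & -\beta(\xi)^{1/2} \\ -\beta(\xi)^{1/2} & 0 \end{pmatrix} = \begin{pmatrix} -k(\xi)\beta(\xi)^{1/2} & 0 \\ 0 & k(\xi)\beta(\xi)^{1/2} \end{pmatrix},
\end{equation*}
which is already symmetric, so $[\tiK\tiA]^s = \tiK\tiA$ and
\begin{equation*}
[\tiK(\xi)\tiA(\xi)]^s + \bB = \begin{pmatrix} -k(\xi)\beta(\xi)^{1/2} & 0 \\ 0 & k(\xi)\beta(\xi)^{1/2} + \bmu/\bv \end{pmatrix}.
\end{equation*}
This diagonal matrix is $\geq \overline\theta I$ exactly when $-k\beta^{1/2} \geq \overline\theta$ and $k\beta^{1/2} + \bmu/\bv \geq \overline\theta$; the choice equalizing the two diagonal entries is $k(\xi) := -\tfrac{\bmu}{2\bv}\beta(\xi)^{-1/2}$, for which $[\tiK\tiA]^s + \bB = \tfrac{\bmu}{2\bv}I$ for every $\xi \in \R$ (in particular with no exceptional behavior at $\xi = 0$), giving \eqref{compmatprop} with $\overline\theta = \bmu/(2\bv)$.

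It then only remains to verify regularity and the uniform bound. Since, by \eqref{defbeta}, $\beta(\xi) = \bq + \xi^2\bkp \geq \bq > 0$, the function $\beta^{-1/2}$ is smooth on $\R$; hence $\tiK \in C^\infty(\R;\R^{2\times 2})$ and is skew-symmetric by construction, and moreover $|\tiK(\xi)| = |k(\xi)| = \tfrac{\bmu}{2\bv}\beta(\xi)^{-1/2} \leq \tfrac{\bmu}{2\bv\sqrt{\bq}} =: C$, uniformly in $\xi$, which is \eqref{tiKbded}.

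I do not expect a genuine obstacle; the only point that needs care is that one must secure a bounded $\tiK$ and a uniform lower bound $\overline\theta > 0$ \emph{simultaneously}, and this is what forces the scaling of $k(\xi)$. The odd symbol $\tiA(\xi)$ grows like $\beta(\xi)^{1/2} \sim \sqrt{\bkp}\,|\xi|$ as $|\xi| \to \infty$, so boundedness of $\tiK$ demands $k(\xi) = O(|\xi|^{-1})$, and the factor $\beta^{-1/2}$ does precisely this while keeping $-k\beta^{1/2}$ equal to the fixed constant $\bmu/(2\bv)$. Conceptually, the compensating matrix is needed only to generate coercivity in the direction $(1,0)^\top$ spanning $\ker\bB$ --- the genuine-coupling direction identified in Lemma \ref{lemgencoup} --- since $\bB$ itself already furnishes coercivity in the orthogonal direction; the entire content of the lemma is that these two contributions can be balanced uniformly in $\xi$ by this elementary choice.
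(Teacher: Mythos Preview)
Your proof is correct and follows essentially the same approach as the paper: both choose $\tiK(\xi)$ to be a scalar multiple of the standard $2\times 2$ skew-symmetric matrix with the scalar proportional to $\beta(\xi)^{-1/2}$, and then verify the diagonal form of $[\tiK\tiA]^s+\bB$ and the uniform bound via $\beta(\xi)\geq \bq>0$. The only cosmetic difference is the constant: the paper takes $\tiK(\xi)=\dfrac{\bmu}{4\bv\,\beta(\xi)^{1/2}}\begin{pmatrix}0&-1\\1&0\end{pmatrix}$, obtaining $[\tiK\tiA]^s+\bB=\dfrac{\bmu}{4\bv}\begin{pmatrix}1&0\\0&3\end{pmatrix}$ and hence $\overline{\theta}=\bmu/(4\bv)$, whereas your choice equalizes the diagonal entries and yields the sharper $\overline{\theta}=\bmu/(2\bv)$.
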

\begin{proof}
Let us define
\begin{equation}
\label{deftiK}
\tiK(\xi) := \frac{\bmu}{4 \beta(\xi)^{1/2} \bv} \begin{pmatrix} 0 & -1 \\ 1 & 0\end{pmatrix}.
\end{equation}
It is then clear that $\tiK$ is smooth, $\tiK \in C^\infty(\R; \R^{2 \times 2})$, and that $\tiK(\xi)^\top = - \tiK(\xi)$ (skew-symmetric). Since,
\[
\beta(\xi)^{-1/2} = \big(\bq + \xi^2 \bkp \big)^{-1/2} \leq \bq^{-1/2},
\]
for all $\xi \in \R$, we conclude that there exists a uniform constant $C > 0$ such that $|\tiK(\xi)| \leq C$ for all $\xi \in \R$. Finally, noticing that
\[
\tiK(\xi) \tiA(\xi) = \frac{\bmu}{4 \beta(\xi)^{1/2} \bv} \begin{pmatrix} 0 & -1 \\ 1 & 0\end{pmatrix} \begin{pmatrix} 0 & -\beta(\xi)^{1/2} \\ -\beta(\xi)^{1/2} & 0 \end{pmatrix} = \frac{\bmu}{4 \bv} \begin{pmatrix} 1 & 0 \\ 0 & -1 \end{pmatrix},
\]
is symmetric and constant, we then have
\begin{align*}
[\tiK(\xi) \tiA(\xi)]^s + \bB = \tiK(\xi) \tiA(\xi) + \bB &= \frac{\bmu}{4 \bv} \begin{pmatrix} 1 & 0 \\ 0 & -1 \end{pmatrix} + \frac{\bmu}{\bv} \begin{pmatrix} 1 & 0 \\ 0 & 1 \end{pmatrix}\\
&=\frac{\bmu}{4 \bv} \begin{pmatrix} 1 & 0 \\ 0 & 3 \end{pmatrix}\\
&\geq \frac{\bmu}{4 \bv} I,
\end{align*}
and \eqref{compmatprop} holds with $\overline{\theta} = \bmu / (4 \bv) > 0$, uniform in $\xi$. The lemma is proved.
\end{proof}

\subsection{Basic energy estimate}
\label{secbasicest}

The following result provides the basic energy estimate for the linearized equations. The proof follows the same ideas of the corresponding result for linearized hyperbolic-parabolic systems which can be found in \cite{KaTh83,UKS84} (see also \cite{AnMP20}). However, given that we are now dealing with symbols instead of matrices evaluated at constant states, we present the full proof for completeness, following the previous analyses as a blueprint and paying attention to the points where the dependence in the Fourier parameter $\xi \in \R$ plays a role. The precise form of the compensating matrix in \eqref{deftiK} is a key ingredient.

\begin{lemma}[basic energy estimate]
\label{lembee}
The solutions $\hV = \hV(\xi,t)$ to system \eqref{eqV} satisfy the estimate
\begin{equation}
\label{bestV}
|\hV(\xi,t)| \leq C |\hV(\xi,0)| \exp \left( - \frac{k \xi^2 t}{1+ \xi^2}\right),
\end{equation}
for all $\xi \in \R$, $t \geq 0$ and some uniform constants $C,k > 0$.
\end{lemma}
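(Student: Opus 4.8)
The plan is to run the standard Kawashima–Shizuta energy argument in Fourier space, but now carried out for the symbol equation \eqref{eqV} rather than for matrices at a fixed equilibrium, keeping track of the dependence on $\xi$. First I would define the basic energy $E_0(\xi,t) := |\hV(\xi,t)|^2 = \langle \hV, \hV\rangle$ and compute its time derivative using \eqref{eqV}. Since $\tiA(\xi)$ is real and symmetric the term $\Re \langle i\xi \tiA \hV, \hV\rangle$ vanishes, and since $\tiB(\xi) = \xi^2 \bB \ge 0$ one obtains
\begin{equation}
\label{planE0}
\tfrac{1}{2}\tfrac{d}{dt} |\hV|^2 = - \xi^2 \langle \bB \hV, \hV\rangle \le 0.
\end{equation}
This gives dissipation only in the $\hat{v}_2$-component (the range of $\bB$), so by itself it is not enough; the missing coercivity must come from the compensating matrix $\tiK(\xi)$ of Lemma \ref{lemourK}.

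Next I would introduce the corrected energy functional
\[
E(\xi,t) := |\hV(\xi,t)|^2 - \epsilon \, \frac{\xi}{1+\xi^2} \, \Im \langle \tiK(\xi) \hV(\xi,t), \hV(\xi,t)\rangle,
\]
for a small parameter $\epsilon > 0$ to be fixed. Because $\tiK(\xi)$ is skew-symmetric, $\langle \tiK \hV, \hV\rangle$ is purely imaginary, so the correction term is real; because $|\tiK(\xi)| \le C$ uniformly (estimate \eqref{tiKbded}) and $|\xi|/(1+\xi^2) \le \tfrac{1}{2}$, the correction is bounded by $\epsilon C |\hV|^2$, hence for $\epsilon$ small enough $E$ is equivalent to $|\hV|^2$, i.e. $\tfrac{1}{2}|\hV|^2 \le E \le \tfrac{3}{2}|\hV|^2$. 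Then I would differentiate the cross term in time: using \eqref{eqV} twice, the dominant contribution is $\xi \cdot \Re\langle \tiK(\xi)(i\xi \tiA(\xi))\hV + (\text{lower order in }\tiB),\hV\rangle$-type terms, which after taking real/imaginary parts produces $-\tfrac{\epsilon \xi^2}{1+\xi^2}\langle ([\tiK\tiA]^s)\hV,\hV\rangle$ as the main term, plus error terms involving $\tiB = \xi^2\bB$ that are controlled by the good dissipative term in \eqref{planE0} (here one uses $\tfrac{\xi^2}{1+\xi^2}\xi^2 \le \xi^2$ and Young's inequality, absorbing constants into $\epsilon$). Adding everything and invoking the compensating-matrix inequality \eqref{compmatprop}, $[\tiK\tiA]^s + \bB \ge \overline{\theta} I$, one arrives at a differential inequality of the form
\[
\tfrac{d}{dt} E(\xi,t) \le - c\, \frac{\xi^2}{1+\xi^2}\, |\hV(\xi,t)|^2 \le - c'\, \frac{\xi^2}{1+\xi^2}\, E(\xi,t),
\]
for uniform constants $c, c' > 0$. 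Grönwall's inequality then gives $E(\xi,t) \le E(\xi,0) \exp(-c' \xi^2 t/(1+\xi^2))$, and the equivalence of $E$ with $|\hV|^2$ yields \eqref{bestV} with $C = \sqrt{3}$ and $k = c'/2$ (say).

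The main obstacle, and the place requiring genuine care, is the bookkeeping of the $\xi$-dependent error terms when differentiating the cross term $\Im\langle \tiK(\xi)\hV,\hV\rangle$: one must check that every term not of the leading coercive type is dominated — uniformly in $\xi$ over all of $\R$, including both the $\xi \to 0$ and $\xi \to \infty$ regimes — by either the good term $\xi^2\langle \bB\hV,\hV\rangle$ from \eqref{planE0} or by $\epsilon \tfrac{\xi^2}{1+\xi^2}|\hV|^2$. This is exactly where the particular choice $\tiK(\xi) \sim \beta(\xi)^{-1/2}$ in \eqref{deftiK} matters: it makes $\tiK(\xi)\tiA(\xi)$ a \emph{constant} matrix, so $\tfrac{d}{dt}$ hitting $\tiK$ itself and the product $\tiK\tiA$ contribute no awkward $\xi$-growth, and the weight $\xi/(1+\xi^2)$ in the functional is precisely tuned so that the parabolic-type bound $\xi^2/(1+\xi^2)$ emerges rather than a bound that degenerates at high frequencies. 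I would emphasize that, unlike the second-order hyperbolic–parabolic case, here $\tiK$, $\tiA$, $\tiB$ all depend on $\xi$, so each inequality must be verified with constants independent of $\xi$; the boundedness \eqref{tiKbded} and the uniform lower bound $\beta(\xi) \ge \bq$ are what make this possible.
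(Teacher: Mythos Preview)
Your proposal is correct and follows essentially the same approach as the paper: both define the corrected energy $|\hV|^2$ plus a cross term of the form $\frac{\xi}{1+\xi^2}\langle \hV, i\tiK\hV\rangle$ (up to sign conventions), use the skew-symmetry and uniform boundedness \eqref{tiKbded} of $\tiK$ to obtain equivalence with $|\hV|^2$, and then combine the basic dissipation identity with the compensating inequality \eqref{compmatprop} and Young's inequality to close the differential inequality. The only cosmetic difference is that the paper first multiplies the basic identity \eqref{la8} by $(1+\xi^2)$ before adding the $\delta$-weighted $\tiK$-identity, whereas you put the weight $\xi/(1+\xi^2)$ directly on the cross term; after dividing through by $1+\xi^2$ the two energies and the resulting Gr\"onwall estimate coincide.
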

\begin{proof}
Take the real part of the $\C^2$-inner product of $\hV$ with equation \eqref{eqV}. The result is
\begin{equation}
\label{la8}
\tfrac{1}{2} \partial_t |\hV|^2 + \xi^2 \< \hV, \bB \hV \> = 0,
\end{equation}
in view that $\tiB(\xi) = \xi^2 \bB$. Now multiply equation \eqref{eqV} by $- i \xi \tiK(\xi)$ and take the inner product with $\hV$. This yields,
\begin{equation}
\label{la9}
- \< \hV, i \xi \tiK \hV_t \> + \xi^2 \< \hV, \tiK \tiA \hV \> - \< \hV, i \xi^3 \tiK \bB \hV \> = 0.
\end{equation}
Since $\tiK$ is skew-symmetric we clearly have 
\[
\Re \< \hV, i \xi \tiK \hV_t \> = \tfrac{1}{2} \xi \partial_t \< \hV, i \tiK \hV \>.
\]
Taking the real part of \eqref{la9} we then obtain
\[
- \tfrac{1}{2} \xi \partial_t \< \hV, i \tiK \hV \> + \xi^2 \< \hV, [\tiK \tiA]^s \hV \> = \Re \big( i \xi^3 \< \hV, \tiK \bB \hV \> \big).
\]

By symmetry and using that $\bB \geq 0$, $[\tiK \tiA]^s = \tiK \tiA$ is a constant symmetric matrix, and that $\tiK(\xi)$ is uniformly bounded for all $\xi$, we arrive at the estimate
\begin{equation}
\label{la10}
- \tfrac{1}{2} \xi \partial_t \< \hV, i \tiK \hV \> + \xi^2 \< \hV, \tiK \tiA \hV \> \leq \vep \xi^2 |\hV|^2 + C_\vep \xi^4 \< \hV, \bB \hV \>
\end{equation}
for any $\vep > 0$ and where $C_\vep > 0$ is a uniform constant depending only on $\vep > 0$, $|\bB^{1/2}|$ and the constant $C > 0$ in \eqref{tiKbded}. 

Now multiply equation \eqref{la8} by $1 + \xi^2$, equation \eqref{la10} by $\delta > 0$ and add them up to obtain,
\[
\begin{aligned}
\tfrac{1}{2} \partial_t \Big[ (1 + \xi^2) |\hV|^2 - \delta \xi \< \hV, i \tiK \hV \> \Big] + \xi^4 \< \hV, \bB \hV \> &+ \xi^2 \Big[ \delta \< \hV, \tiK \tiA \hV \> + \> \hV, \bB \hV \> \Big] \\ &\leq \vep \delta \xi^2 |\hV|^2 + \delta C_\vep \xi^4 \< \hV, \bB \hV \>.
\end{aligned}
\]
Let us define the \emph{energy},
\[
\cE := |\hV|^2 - \frac{\delta \xi}{1+ \xi^2} \< \hV, i \tiK \hV \>.
\]
It is easy to verify that the quantity $\cE$ is real in view that $\tiK$ is skew-symmetric. Moreover, there exists $\delta_0 > 0$ sufficiently small such that one can find a uniform constant $C_1 > 0$ for which
\[
C_1^{-1} |\hV|^2 \leq \cE \leq C_1 |\hV|^2,
\]
provided that $0 < \delta < \delta_0$. Hence, $\cE$ is indeed an energy, equivalent to $|\hV|^2$, for $\delta > 0$ sufficiently small.

From Lemma \ref{lemourK} we know there exists a uniform $\overline{\theta} > 0$, independent of $\xi \in \R$, for which \eqref{compmatprop} holds for all $\xi \in \R$. Therefore, if $0 < \delta < 1$ we get
\[
\< \hV, (\delta \tiK \tiA + \bB) \hV \> \geq \delta \overline{\theta} |\hV|^2.
\] 
Choosing $\vep = \overline{\theta}/2 > 0$ and $0 < \delta < \min \{ 1, \delta_0, 1/C_\vep\}$, all uniform constants in $\xi \in \R$, we arrive at
\[
\tfrac{1}{2} \partial_t \cE + \tfrac{1}{2} \Big( \frac{\xi^2}{1+\xi^2}\Big) \delta \overline{\theta} |\hV|^2 + \frac{(1- \delta C_\vep)}{1+ \xi^2} \xi^4 \< \hV, \bB \hV \> \leq 0.
\]
Therefore we obtain
\[
\partial_t \cE + \frac{k \xi^2}{1+ \xi^2} \cE \leq 0,
\]
where $k = \delta \overline{\theta}/ C_1 > 0$. This yields the result.
\end{proof}

As a consequence of the basic energy estimate \eqref{bestV} and the transformation \eqref{varV}, we readily obtain the following appropriate decay for the original variables, yielding the right regularity for the specific volume and the velocity field.
\begin{corollary}
The solutions $\hU(\xi,t) = \big(\hU_1(\xi,t), \hU_2(\xi,t)\big)^\top$ to system \eqref{fouriersys2} satisfy the estimate
\begin{equation}
\label{bestU}
(1+\xi^2) |\hU_1(\xi,t)|^2 + |\hU_2(\xi,t)|^2 \leq C \big[(1+\xi^2) |\hU_1(\xi,0)|^2 + |\hU_2(\xi,0)|^2 \big] \exp \left( - \frac{2k \xi^2 t}{1+ \xi^2}\right),
\end{equation}
for all $\xi \in \R$, $t \geq 0$ and some uniform constants $C > 0$ and $k > 0$.
\end{corollary}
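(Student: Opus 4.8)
The plan is to transfer the basic energy estimate \eqref{bestV} of Lemma \ref{lembee} back to the original Fourier variable $\hU$ by means of the change of variables \eqref{varV}. Since the symbol symmetrizer selected in \eqref{defA0} is the diagonal matrix $A_0(\xi) = \mathrm{diag}(\beta(\xi),1)$ with $\beta(\xi) = \bq + \xi^2 \bkp$, its positive square root is $A_0(\xi)^{1/2} = \mathrm{diag}(\beta(\xi)^{1/2},1)$, and the relation $\hV = A_0(\xi)^{1/2}\hU$ reads componentwise $\hV_1 = \beta(\xi)^{1/2}\hU_1$, $\hV_2 = \hU_2$. Consequently
\[
|\hV(\xi,t)|^2 = \beta(\xi)\,|\hU_1(\xi,t)|^2 + |\hU_2(\xi,t)|^2
\]
for all $\xi \in \R$ and $t \geq 0$, and the same identity holds at $t = 0$.

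First I would record the elementary two-sided comparison of $\beta(\xi)$ with $1 + \xi^2$. Since $\bq$ and $\bkp$ are fixed positive constants determined by the equilibrium state $\bU$, setting $c_1 := \min\{\bq,\bkp\} > 0$ and $c_2 := \max\{\bq,\bkp\} > 0$ gives
\[
c_1 (1+\xi^2) \leq \beta(\xi) = \bq + \xi^2\bkp \leq c_2 (1+\xi^2), \qquad \xi \in \R.
\]
Substituting this into the identity for $|\hV|^2$ and handling the $|\hU_2|^2$ term with the constants $\min\{c_1,1\}$ and $\max\{c_2,1\}$, one obtains that $|\hV(\xi,t)|^2$ is equivalent, with constants independent of $\xi$, to $(1+\xi^2)|\hU_1(\xi,t)|^2 + |\hU_2(\xi,t)|^2$, and likewise at $t = 0$.

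Finally, I would square \eqref{bestV} to get $|\hV(\xi,t)|^2 \leq C^2 |\hV(\xi,0)|^2 \exp\big(-2k\xi^2 t/(1+\xi^2)\big)$, bound the left-hand side below by the $\hU$-expression at time $t$ and the right-hand side above by the $\hU$-expression at time $0$, and absorb the numerical factors $(\min\{c_1,1\})^{-1}$, $\max\{c_2,1\}$ and $C^2$ into a single new constant, still denoted $C$. This produces \eqref{bestU} with exactly the same exponential rate $2k\xi^2/(1+\xi^2)$. There is no genuine obstacle in this argument; the one point that deserves attention is the uniformity in the Fourier parameter $\xi$ of the bounds $c_1(1+\xi^2) \leq \beta(\xi) \leq c_2(1+\xi^2)$, and hence of all the resulting constants, which is immediate from the explicit affine dependence of $\beta$ on $\xi^2$.
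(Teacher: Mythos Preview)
Your proof is correct and follows essentially the same route as the paper's own argument: both use the transformation \eqref{varV} to express $|\hV|^2 = \beta(\xi)|\hU_1|^2 + |\hU_2|^2$, square the basic estimate \eqref{bestV}, and then invoke the uniform two-sided comparison $\min\{\bq,\bkp\}(1+\xi^2) \leq \beta(\xi) \leq \max\{\bq,\bkp\}(1+\xi^2)$ to pass to the stated bound. Your treatment is slightly more explicit in tracking the constants $\min\{c_1,1\}$ and $\max\{c_2,1\}$ needed for the $|\hU_2|^2$ term, but otherwise the arguments coincide.
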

\begin{proof}
Let $\hU(\xi,t)$ be a solution to \eqref{fouriersys2}. In view of the transformation \eqref{varV} there holds
\[
|\hV |^{2} = | A_{0}(\xi)^{1/2} \hU|^{2} = 
\left| \begin{pmatrix} \beta(\xi)^{1/2} & 0 \\ 0 & 1 \end{pmatrix} \hU \right|^2 = \beta(\xi) |\hU_1|^2 + |\hU_2|^2. 
\]
Now, since \eqref{fouriersys2} gets transformed into \eqref{eqV} and, by Lemma \ref{lembee}, $\hV$ satisfies estimate \eqref{bestV}, we obtain
\[
\beta(\xi) |\hU_1(\xi,t) |^2 + |\hU_2(\xi,t) |^2 \leq C \big( \beta(\xi) |\hU_1(\xi,0) |^2 + |\hU_2(\xi,0) |^2 \big) \exp \left( - \frac{2k \xi^2 t}{1+ \xi^2}\right),
\]
for some $C > 0$. Observing that there exist uniform constants $C_2 = \min \{ \bq, \bkp\} > 0$ and $C_3 = \max \{ \bq, \bkp \} >0$ such that $C_2 (1 + \xi^2) \leq \beta(\xi) \leq C_3 (1 + \xi^2)$ for all $\xi$, we obtain the result.

\end{proof}

\subsection{Linear semigroup decay rates}
\label{linsemdec}
As a consequence of the basic (pointwise) energy estimate \eqref{bestU} in the Fourier space we obtain the desired decay estimate for the solutions to the linearized system \eqref{ccoeffs}. This is the content of the following
\begin{lemma}
\label{lemlindecay}
Suppose that $U = (U_1, U_2)^\top$ is a solution to the linear system \eqref{ccoeffs} with initial data $U(x,0) \in \big( H^{s+1}(\R) \times H^s(\R) \big) \cap \big( L^1(\R) \times L^1(\R) \big)$ for some $s \geq 2$. Then for each fixed $0 \leq \ell \leq s$ there holds the estimate,
\begin{equation}
\label{linest}
\begin{aligned}
\Big( \| \partial_x^{\ell} U_1(t) \|_1^2 + \| \partial_x^{\ell} U_2(t) \|_0^2 \Big)^{1/2} &\leq C e^{-c_1t} \Big( \| \partial_x^{\ell} U_1(0) \|_1^2 
+ \| \partial_x^{\ell} U_2(0) \|_0^2 \Big)^{1/2}  +\\ & \;\; + C (1+t)^{-(\ell/2 + 1/4)} \| U(0) \|_{L^1},
\end{aligned}
\end{equation}
for all $t \geq 0$ and some uniform constants $C, c_1 > 0$. 
\end{lemma}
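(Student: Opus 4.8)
The plan is to pass to the Fourier side, invoke the pointwise bound \eqref{bestU} of the preceding corollary, and split the frequency integral into a low-frequency regime $|\xi| \le 1$ and a high-frequency regime $|\xi| \ge 1$. First, by Plancherel's theorem,
\[
\| \partial_x^{\ell} U_1(t) \|_1^2 + \| \partial_x^{\ell} U_2(t) \|_0^2 = \int_\R \xi^{2\ell} \Big[ (1+\xi^2)|\hU_1(\xi,t)|^2 + |\hU_2(\xi,t)|^2 \Big] \, d\xi,
\]
which is precisely the quantity pointwise controlled by \eqref{bestU}; the assumption $0 \le \ell \le s$ together with $U_1(0) \in H^{s+1}(\R)$, $U_2(0) \in H^s(\R)$ guarantees $\partial_x^\ell U_1(0) \in H^1(\R)$ and $\partial_x^\ell U_2(0) \in L^2(\R)$, so that the first term on the right-hand side of \eqref{linest} is finite.

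For the high-frequency part I would use that $\xi^2/(1+\xi^2) \ge 1/2$ when $|\xi| \ge 1$, so that the exponential factor in \eqref{bestU} is bounded by $e^{-kt}$; combining this with Plancherel's theorem bounds the contribution of $\{|\xi| \ge 1\}$ to the left-hand side squared by $C e^{-kt}\big( \| \partial_x^{\ell} U_1(0) \|_1^2 + \| \partial_x^{\ell} U_2(0) \|_0^2 \big)$.

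For the low-frequency part I would use the elementary bounds $1 + \xi^2 \le 2$, $\xi^2/(1+\xi^2) \ge \xi^2/2$, and $|\hU_j(\xi,0)| \le C\| U_j(0) \|_{L^1} \le C \|U(0)\|_{L^1}$, valid for every $\xi \in \R$. This reduces the estimate of the contribution of $\{|\xi| \le 1\}$ to
\[
C \|U(0)\|_{L^1}^2 \int_{|\xi| \le 1} \xi^{2\ell} e^{-k\xi^2 t} \, d\xi \le C \|U(0)\|_{L^1}^2 \int_\R |\xi|^{2\ell} e^{-k\xi^2 t}\, d\xi,
\]
and the change of variables $\eta = \sqrt{t}\,\xi$ (for $t \ge 1$), respectively a crude bound by a fixed constant (for $0 \le t \le 1$), shows that this last integral is $\le C_\ell (1+t)^{-(\ell + 1/2)}$. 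Hence the low-frequency contribution to the left-hand side squared is $\le C(1+t)^{-(\ell + 1/2)} \|U(0)\|_{L^1}^2$. Adding the two regimes and using $\sqrt{a+b} \le \sqrt{a} + \sqrt{b}$ yields \eqref{linest} with, for instance, $c_1 = k/2$.

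I do not expect a serious obstacle: this is the standard Fourier-splitting argument for hyperbolic--parabolic-type decay, now available because \eqref{bestU} supplies the required pointwise semigroup bound. The only points needing a little care are keeping all constants uniform in $\xi$, propagating the weight $1+\xi^2$ attached to $\hU_1$ consistently through both frequency regimes, and checking that the low-frequency scaling produces exactly the exponent $\ell/2 + 1/4$ once the square root is taken.
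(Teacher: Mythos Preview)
Your proposal is correct and follows essentially the same route as the paper: invoke the pointwise Fourier estimate \eqref{bestU}, split into $|\xi|\le 1$ and $|\xi|\ge 1$, use $\xi^2/(1+\xi^2)\ge \xi^2/2$ together with $|\hU(\xi,0)|\le C\|U(0)\|_{L^1}$ and the scaling integral in the low-frequency regime, and $\xi^2/(1+\xi^2)\ge 1/2$ with Plancherel in the high-frequency regime, then take square roots with $c_1=k/2$. The paper packages the low-frequency scaling integral as a separate lemma (Lemma~\ref{lemintbded}), but the content is identical to your change-of-variables argument.
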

\begin{proof}
Fix $\ell \in [0,s]$, multiply \eqref{bestU} by $\xi^{2 \ell}$ and integrate in $\xi \in \R$. The result is
\[
\int_\R \Big[ \xi^{2 \ell}(1+\xi^2) |\hU_1(\xi,t)|^2 + \xi^{2 \ell} |\hU_2(\xi,t)|^2 \Big] \, d\xi \leq C J_1(t) + C J_2(t),
\]
where,
\[
\begin{aligned}
J_1(t) &:= \int_{-1}^1 \Big[ \xi^{2 \ell}(1+\xi^2) |\hU_1(\xi,0)|^2 + \xi^{2 \ell} |\hU_2(\xi,0)|^2 \Big] \exp \left( - \frac{2k \xi^2 t}{1+ \xi^2}\right) \, d \xi,\\
J_2(t) &:= \int_{|\xi|\geq 1} \Big[ \xi^{2 \ell}(1+\xi^2) |\hU_1(\xi,0)|^2 + \xi^{2 \ell} |\hU_2(\xi,0)|^2 \Big] \exp \left( - \frac{2k \xi^2 t}{1+ \xi^2}\right) \, d \xi.
\end{aligned}
\]

To estimate $J_1(t)$, notice that if $|\xi| \leq 1$ then $\exp (-2kt\xi^2/(1+\xi^2)) \leq \exp(-kt\xi^2)$ and we get
\[
J_1(t) \leq 2 \int_{-1}^1 \xi^{2 \ell} |\hU(\xi,0)|^2 e^{-kt \xi^2} \, d\xi \leq 2 \sup_{\xi \in \R} |\hU(\xi,0)|^2 \int_{-1}^1 \xi^{2 \ell} e^{-kt \xi^2} \, d\xi.
\]
But since for any fixed $\ell \in [0,s]$ and constant $k > 0$, the integral
\begin{equation}
\label{I0}
I_0(t) := (1+t)^{\ell + 1/2} \int_{-1}^1 \xi^{2 \ell} e^{-kt \xi^2} \, d\xi \leq C,
\end{equation}
is uniformly bounded for all $t > 0$ with some constant $C > 0$ (see Lemma \ref{lemintbded} below), we obtain
\begin{equation}
\label{estJ1}
J_1(t) \leq C (1+t)^{-(\ell + 1/2)} \| U(x,0)\|_{L^1}^2.
\end{equation}

On the other hand if $|\xi| \geq 1$ then $\exp (-2kt\xi^2/(1+\xi^2)) \leq \exp(-kt)$. Therefore, from Plancherel's theorem we obtain
\[
\begin{aligned}
J_2(t) &\leq e^{-kt} \int_{|\xi|\geq 1} \xi^{2 \ell}(1+\xi^2) |\hU_1(\xi,0)|^2 + \xi^{2 \ell} |\hU_2(\xi,0)|^2 \, d\xi \\
&= e^{-kt} \int_{|\xi|\geq 1} ( \xi^{2\ell}+ \xi^{2(\ell +1)} ) |\hU_1(\xi,0)|^2 + \xi^{2 \ell} |\hU_2(\xi,0)|^2 \, d\xi \\
&\leq  e^{-kt} \int_\R (\xi^{2\ell}+ \xi^{2(\ell+1)}) |\hU_1(\xi,0)|^2 + \xi^{2 \ell} |\hU_2(\xi,0)|^2 \, d\xi \\
&=  e^{-kt} \big( \| \partial_x^{\ell} U_1(0) \|_1^2 + \| \partial_x^{\ell} U_2(0) \|_0^2 \big),
\end{aligned}
\]
for all $t > 0$. Combining both estimates we obtain the result with $c_1 = k/2 > 0$ and the lemma is proved.
\end{proof}


\begin{remark} As a consequence of Lemma \ref{lemlindecay}, we can easily obtain estimate \eqref{linest} for perturbed solutions (that is, replacing $U$ and $U(x,0)$ by $U-\bU$ and $U(x,0)-\bU$, respectively), which is the form needed in Section \ref{secglobal} (details are left to the reader).
\end{remark}

The semigroup associated to the solutions to the linear problem \eqref{ccoeffs} can be expressed in terms of the inverse Fourier transform,
\[
(e^{t \cL} f)(x) = \frac{1}{\sqrt{2 \pi}} \int_\R e^{i x \xi}e^{t M(i\xi)} \hat{f}(\xi) \, d \xi,
\]
where 
\[
M(z) := \begin{pmatrix} 0 & z \\ z (\bq - z^2 \bkp) & z^2 \bmu/\bv\end{pmatrix}, \quad z \in \C,
\]
\[
M(i\xi) = - (i \xi A(\xi) + \xi^2 \bB), \qquad \xi \in \R,
\]
and $U(x,t) = (e^{t \cL} f)(x)$ is the solution to \eqref{ccoeffs} with initial condition $f = (f_1, f_2)^\top$. Therefore, it follows from Lemma \ref{lemlindecay} the linear decay estimate for the semigroup.

\begin{corollary}
\label{corsgdecay}
For any $f \in \big( H^{s+1}(\R) \times H^s(\R) \big) \cap \big( L^1(\R) \times L^1(\R) \big)$, $s \geq 2$, and all $0 \leq \ell \leq s$, $t > 0$, there holds
\begin{equation}
\label{linestsg}
\begin{aligned}
\Big( \| \partial_x^{\ell} (e^{t \cL} f)_1(t) \|_1^2 + \| \partial_x^{\ell} (e^{t \cL} f)_2(t) \|_0^2 \Big)^{1/2} &\leq C e^{-c_1t} \Big( \| \partial_x^{\ell } f_1 \|_1^2 + \| \partial_x^{\ell} f_2 \|_0^2 \Big)^{1/2}  +\\ & \;\; + C (1+t)^{-(\ell/2 + 1/4)} \| f \|_{L^1},
\end{aligned}
\end{equation}
for some uniform $C, c_1 > 0$.
\end{corollary}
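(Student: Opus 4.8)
The plan is to read Corollary~\ref{corsgdecay} as nothing more than a restatement of Lemma~\ref{lemlindecay} in the language of the solution semigroup, so the proof should be very short. First I would fix $f = (f_1,f_2)^\top \in \big(H^{s+1}(\R)\times H^s(\R)\big)\cap\big(L^1(\R)\times L^1(\R)\big)$ with $s\geq 2$ and set $U(x,t) := (e^{t\cL}f)(x)$. From the inverse-Fourier representation recorded just above the statement, with symbol $e^{tM(i\xi)}$ and $M(i\xi) = -(i\xi A(\xi) + \xi^2\bB)$, one has $\hU(\xi,t) = e^{tM(i\xi)}\hat f(\xi)$, which is exactly the solution of the Fourier system \eqref{fouriersys2} with $\hU(\xi,0) = \hat f(\xi)$; hence $U$ solves the linear system \eqref{ccoeffs} with initial datum $U(\cdot,0) = f$.

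Next I would simply verify that the hypotheses of Lemma~\ref{lemlindecay} are met: the assumed regularity and integrability of $f$ are precisely the conditions imposed there on $U(\cdot,0)$. Therefore, for each fixed $\ell\in[0,s]$, Lemma~\ref{lemlindecay} yields estimate \eqref{linest} for $U$, and substituting $U_j(0) = f_j$ and $\|U(0)\|_{L^1} = \|f\|_{L^1}$ turns it verbatim into \eqref{linestsg}, with the same constants $C,c_1>0$. Equivalently, one may reach the same conclusion directly by multiplying the pointwise bound \eqref{bestU} applied to $\hat f$ by $\xi^{2\ell}$, integrating in $\xi$, splitting the integral over $|\xi|\le 1$ and $|\xi|\ge 1$, and invoking the elementary bound \eqref{I0} together with Plancherel's theorem — i.e.\ repeating the computation already carried out in the proof of Lemma~\ref{lemlindecay}.

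I do not expect any genuine obstacle here. The only point that deserves a word of care is the identification of $e^{t\cL}f$ with the solution of \eqref{ccoeffs}, i.e.\ that the Fourier multiplier $e^{tM(i\xi)}$ is well defined on the stated space and produces the unique solution; this follows from the explicit exponential of the $2\times 2$ symbol (which is smooth and of at most polynomial growth in $\xi$), the uniqueness of solutions to the constant-coefficient ODE \eqref{fouriersys2} in the Fourier variable, and Plancherel's theorem, all of which are already invoked in the proof of Lemma~\ref{lemlindecay}.
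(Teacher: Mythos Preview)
Your proposal is correct and matches the paper's approach exactly: the paper does not even write out a separate proof for Corollary~\ref{corsgdecay}, simply noting that it follows from Lemma~\ref{lemlindecay} once $U(x,t) = (e^{t\cL}f)(x)$ is identified with the solution of \eqref{ccoeffs} with initial datum $f$. Your additional remarks on the well-definedness of the Fourier multiplier are more than what the paper provides, but entirely appropriate.
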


\section{Global decay of perturbations of equilibrium states}
\label{secglobal}
Once we have obtained decay rates for the solution to the linearized problem \eqref{linop} around the constant state $\bU$, in this section we prove the global existence of solutions to the nonlinear problem \eqref{nlop} and provide a decay rate of perturbations of the constant equilibrium state as time goes to infinity. 

\subsection{Nonlinear decay rate of solutions} In this section, we obtain the decay rate for the solution to the initial value problem of the nonlinear system \eqref{nlop}, for which we know there exists a local solution by Theorem \ref{thmlocale}. Let us write again the nonlinear system \eqref{nlop}, namely
\begin{equation}
\label{nlop2}
U_t = \cL U + \partial_x H,
\end{equation}
with $\cL $ and $H$ as in Section~\ref{linzedpob}, and consider the initial condition
\begin{equation}\label{InCondNlop}
U(x,0) = U_{0}(x):=\left(v_{0}(x)+ \bv, u_{0}(x)+\bu \right)^\top.
\end{equation}
Under the assumptions of the local existence Theorem \ref{thmlocale} (or Theorem \ref{thmlocalepert}), let us further suppose that 
\[
\left(v_{0}, u_{0} \right)^\top \in \left( H^{s+1}(\R) \times H^{s}(\R)\right) \cap \left( L^{1}(\R) \times L^{1}(\R ) \right),
\]
for some $s \geq 3$. Using the semigroup, $e^{t \cL}$, defined in Section \ref{linsemdec}, we recast the local solution, $U = (U_1, U_2)^\top = (v+ \bv,u+ \bu)^\top$ to \eqref{nlop2} and \eqref{InCondNlop} as  
\begin{equation}
\label{SemSol}
U(x,t)= e^{t \cL}U_{0}(x)+ \int_{0}^{t} e^{(t-z)\cL}(H(x,z)_{x}) \, dz,
\end{equation}
where $H(x,z) = (0, H_2(x,z))^\top$, where $H_2$ is defined in \eqref{orderH2}. Thus, apply the decay of the semigroup (estimate \eqref{linestsg}) to the solution \eqref{SemSol} for some $0 \leq \ell \leq s-1$, in order to obtain the inequality
\begin{equation}
\label{star1}
\begin{aligned}
\big( \Vert \partial_{x}^{\ell}v(t) \Vert_{1}^{2} + \Vert \partial_{x}^{\ell}u(t) \Vert_{0}^{2} \big)^{1/2} 
&\leq C  e^{-c_{1}t}\big( \Vert  \partial_{x}^{\ell}v_{0} \Vert_{1}^{2} +\Vert \partial_{x}^{\ell}u_{0} \Vert^{2}_0 \big)^{1/2} + \\
&\quad + C( 1+t )^{-(1/4 + \ell/2)} \Vert (v_{0}, u_{0}) \Vert_{L^{1}} +\\
&\quad + C\int_{0}^{t} \Vert \partial_{x}^{\ell}(e^{(t-z)\cL} (0, H_{2}(x, z)_{x})^\top) \Vert_0 \: dz . 
\end{aligned}
\end{equation}
From the (easy to verify) identity
\[
\partial_{x}^{\ell}e^{(t-z)\cL} \begin{pmatrix} 0 \\ H_{2}(x,z)_{x} \end{pmatrix} = \partial_{x}^{\ell+1}e^{(t-z)\cL} \begin{pmatrix} 0 \\ H_{2}(x,z) \end{pmatrix},
\]
and upon application of \eqref{linestsg} with $\ell +1 \leq s$ replacing $\ell$, we obtain
\begin{equation}
\label{star2}
\begin{aligned}
\int_{0}^{t} \Vert \partial_{x}^{\ell}(e^{(t-z)\cL} (0, H_{2}(x, z)_{x})^\top) \Vert_0 \: dz &\leq C \int_0^t e^{-c_1(t-z)} \| \partial^{\ell+1} H_2(\cdot,z) \|_0 \, dz + \\
&\quad + C \int_0^t (1+t-z)^{\ell/2 + 3/4} \| H_2(\cdot,z) \|_{L^1} \, dz.
\end{aligned}
\end{equation}
Hence, combining \eqref{star1} and \eqref{star2} yields
\begin{equation}
\label{DerEstNlop}
\begin{aligned}
\big( \Vert \partial_{x}^{\ell}v(t) \Vert_{1}^{2} &+ \Vert \partial_{x}^{\ell}u(t) \Vert_{0}^{2} \big)^{1/2} 
\leq \\
&\leq C\left( e^{-c_{1}t} \big( \Vert  \partial_{x}^{\ell}v_{0} \Vert_{1}^{2} + \Vert \partial_{x}^{\ell}u_{0} \Vert^{2} \big)^{1/2} + ( 1+t )^{-(1/4 + \ell /2)} \Vert (v_{0}, u_{0}) \Vert_{L^{1}} \right) + \\ 
& + C\int_{0}^{t}\left( e^{-c_{1}(t-z)} \Vert \partial_{x}^{\ell+1}H_{2}(\cdot, z) \Vert_{0} + \left(1 + t-z \right)^{-(3/4+\ell/2)}  \Vert H_{2}(\cdot, z) \Vert_{L^{1}} \right) \: dz.
\end{aligned}
\end{equation}
Notice that the divergence form of the nonlinear term is crucial to obtain the algebraic time decay with exponent $-(3/4 + \ell/2)$ inside  the integral.
%

For the sake of brevity, in the sequel we use the following notation. Let us define,
\[
\Vert U(t) \Vert_{k} := \left(\Vert v(t) \Vert_{k+1}^{2} + \Vert u(t) \Vert_{k}^{2}  \right)^{1/2},
\]
for all $0 \leq k \leq s$, and $\Vert U_{0} \Vert_{L^{1}} = \Vert (v_{0}, u_{0}) \Vert_{L^{1}}$.
%
%
Therefore, summing up estimate \eqref{DerEstNlop} for $\ell = 0,1, \ldots, s-1$ we obtain
\begin{equation}\label{nonestHs-1}
\begin{split}
\Vert U(t) \Vert_{s-1} &\leq C(1+t)^{-1/4}\left( \Vert U_{0}\Vert_{s-1} + \Vert U_{0} \Vert_{L^{1}} \right) + \\ 
&\quad\: + \int_{0}^{t}\left( e^{-c_{1}(t-z)}\Vert H_{2}(\cdot, z)\Vert_{s}+ (1+t-z)^{-3/4}\Vert H_{2}(\cdot, z)\Vert_{L^{1}} \right)\: dz.
\end{split}
\end{equation} 

Now, we estimate $\Vert H_{2}(\cdot, z) \Vert_{s}$ and $\Vert H_{2}(\cdot, z) \Vert_{L^{1}}$. First, from \eqref{orderH2} we know that for $U$ close to $\bU$ there holds
\[
H_2(U, U_x, U_{xx}) = O(v^2 + |v||u_x| + |v||v_{xx}| + v_x^2).
\]
Next, we estimate the term $v_{x}^{2}$ with a little care. For this purpose, let us recall that for $u_{1}, u_{2}\in H^{m}(\mathbb{R})\cap L^{\infty}(\mathbb{R})$ we have 
\[
\Vert u_{1}u_{2} \Vert_{m} \leq C \left( \Vert u_{1} \Vert_{m}\| u_{2} \|_{L^{\infty}} + \Vert u_{2} \Vert_{m}\| u_{1} \|_{L^{\infty}} \right)
\]
(see, e.g., Lemma 3.2 in \cite{HaLi96a}). Therefore, by Sobolev imbedding theorem, we get
\[
\Vert v_{x}^{2}(z) \Vert_{s} \leq 2C\Vert v_{x}(z)\Vert_{s} \| v_{x}(z) \|_{L^{\infty}} \leq C \Vert v_{x}(z) \Vert_{s+1} \Vert v(z) \Vert_{2},
\]
for all $z \in [0,t]$. Upon application of Sobolev technical calculus inequalities (see, for example, Lemmata 2.3 and 2.5, pp. 15--16, and equations (3.25), p. 48, in \cite{KaTh83}), we arrive at
\begin{equation}
\label{H2-s}
\begin{split}
\| H_{2}(\cdot, z) \|_{s} &\leq C \big( \Vert v(z) \Vert_{s}^{2} + \Vert v(z) \Vert_{s}\Vert v_{xx}(z) \Vert_{s} + \Vert v(z) \Vert_{s} \Vert u_{x}(z) \Vert_{s} +  \Vert v(z) \Vert_{2} \Vert v_{x}(z) \Vert_{s+1} \big)
\\ &\leq C \left( \Vert v(z) \Vert_{s}^{2}+ \Vert v(z) \Vert_{s} \Vert u_{x}(z) \Vert_{s} + \Vert v(z) \Vert_{s}\Vert v_{x}(z) \Vert_{s+1}  \right),
\end{split}
\end{equation}
and 
\begin{equation}\label{H2-L1}
\Vert H_{2}(\cdot,z) \Vert_{L^{1}} \leq C \Vert (v, u)(z) \Vert_{2}^{2} \leq C \Vert U(z) \Vert_{2}^{2} \leq C\Vert U(z) \Vert_{s-1}^{2},
\end{equation} 
for all $z \in [0,t]$ because $s\geq 3$, where $C > 0$ is a uniform constant. Consequently, estimates \eqref{nonestHs-1}, \eqref{H2-s} and \eqref{H2-L1} yield
\begin{equation}\label{nonestHs-1.2}
\begin{split}
\Vert U(t) \Vert_{s-1} &\leq C \left(1+t \right)^{-1/4} \left( \Vert U_{0}\Vert_{s-1}+ \Vert U_{0} \Vert_{L^{1}} \right) \:+ \\ & \:\:\:\: + C\sup_{0 \leq z \leq t} \Vert v(z) \Vert_{s} \int_{0}^{t}e^{-c_{1}(t-z)}\Vert v(z) \Vert_{s}\:dz \:+ \\ &\:\:\:\: +C \left( \int_{0}^{t}\Vert u_{x}(z)\Vert_{s}^{2}\: dz \right)^{1/2} \left( \int_{0}^{t}e^{-2c_{1}(t-z)}\Vert v(z) \Vert_{s}^{2}\: dz \right)^{1/2}\: + \\ &\:\:\:\:+ C\left( \int_{0}^{t}\Vert v_{x}(z) \Vert_{s+1}^{2}\: dz \right)^{1/2} \left( \int_{0}^{t}e^{-2c_{1}(t-z)}\Vert v(z) \Vert_{s}^{2} \: dz \right)^{1/2}\: + \\ &\:\:\:\: + C\int_{0}^{t}\left( 1+ t-z\right)^{-3/4} \Vert U(z) \Vert_{s-1}^{2} \: dz.
\end{split}
\end{equation}
In order to simplify the notation, let us define 
\[
E_{s}(t) := 
\sup_{0 \leq z \leq t} \left( 1+ z \right)^{1/4}\Vert U(z) \Vert_{s-1}.
\]
Hence, we obtain 
\begin{equation}
\label{nonestHs-1.sup}
E_{s}(t) \leq C \big(\Vert U_{0} \Vert_{s-1} + \Vert U_{0} \Vert_{L^{1}} \big) + C I_{1}(t)\vertiii{U}_{s,t} E_{s}(t) + C I_{2}(t) E_{s}(t)^2,
\end{equation}
where the norm $\vertiii\cdot{}_{s,t}$ is defined in \eqref{tripnorm} and
\begin{equation}\label{mu1}
\begin{split}
I_{1}(t) &:= \sup_{0\leq z \leq t}\left( 1+ z\right)^{1/4} \int_{0}^{z}e^{-c_{1}(z-z_{1})}(1+z_{1})^{-1/4}\: dz_{1} \: + \\ & \:\:\:\: + \sup_{0\leq z \leq t}\left( 1 + z \right)^{1/4}\left[ \int_{0}^{z}e^{-2c_{1}(z-z_{1})}(1+z_{1})^{-1/2} \: dz_{1} \right]^{1/2},
\end{split}
\end{equation}
\begin{equation}\label{mu2}
I_{2}(t) := \sup_{0 \leq z \leq t}\left( 1+ z \right)^{1/4}\int_{0}^{z}\left( 1+z-z_{1} \right)^{-3/4}(1+z_{1})^{-1/2} \: dz_{1}.
\end{equation}
Since both $I_{1}(t)$ and $I_{2}(t)$ are uniformly bounded in $t$ (see Lemma \ref{lemintbded}), we readily obtain the estimate
\begin{equation}
\label{finnonest}
E_{s}(t) \leq C\big(\Vert U_{0} \Vert_{s-1} + \Vert U_{0} \Vert_{L^{1}} \big) + C \vertiii{U}_{s,t} E_{s}(t) +  C E_{s}(t)^2.
\end{equation}
Hence, combining Theorem \ref{thmlocalepert} and estimate \eqref{finnonest}, we have proved the following

\begin{theorem}[nonlinear decay estimate]
\label{thnonest}
Under hypotheses \hyperref[H1]{(H$_1$)} - \hyperref[H3]{(H$_3$)}, let $s\geq 3$ and suppose that $ \left(v_{0}, u_{0} \right)^\top \in \left( H^{s+1}(\R) \times H^{s}(\R)\right) \cap \left( L^{1}(\R) \times L^{1}(\R ) \right)$ satisfies the smallness assumption (condition \eqref{smallpert}) of Theorem \ref{thmlocalepert} as a perturbation of a constant equilibrium state $\bU =  (\bv, \bu)^\top$. Let $(v+\bv,u+ \bu)(x, t)$ be the solution to \eqref{nlop2} and \eqref{InCondNlop} from Theorem \ref{thmlocalepert}, satisfying $(v+\bv, u+\bu)\in X_{s}(0,T;m,M)$. Then there exist constants $ 0 < a_{1} \leq a_{0}$, $\delta_{1}= \delta_{1}(a_{1}) > 0$ and $C_{1}=C_{1}(a_{1},\delta_{1})>1$ such that if $\vertiii{U}_{s,T} \leq a_{1}$ and $\Vert U_{0} \Vert_{s-1} + \Vert U_{0} \Vert_{L^{1}} < \delta_1$, then the following inequality 
\begin{equation}
\label{nonlest}
\Vert U(t) \Vert_{s-1} \leq C_{1} \left( 1 + t \right)^{-1/4} \big(\Vert U_{0} \Vert_{s-1} + \Vert U_{0} \Vert_{L^{1}} \big),
\end{equation}
holds for every $t\in [0, T]$.
\end{theorem}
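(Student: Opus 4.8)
The plan is to close a standard continuity (bootstrap) argument on the quantity
\[
E_s(t) = \sup_{0 \le z \le t} (1+z)^{1/4} \Vert U(z) \Vert_{s-1},
\]
starting from the closed inequality \eqref{finnonest}, which is already in place. First I would record two elementary facts. Since $(v+\bv,u+\bu) \in X_s(0,T;m,M)$, the perturbations are continuous in time with values in $H^{s+1}(\R)$ and $H^s(\R)$, so $z \mapsto (1+z)^{1/4}\Vert U(z)\Vert_{s-1}$ is continuous on $[0,T]$ and hence $E_s$ is continuous and non-decreasing there, with $E_s(0) = \Vert U_0 \Vert_{s-1}$. Also, the triple norm \eqref{tripnorm} is non-decreasing in its upper time limit, so $\vertiii{U}_{s,t} \le \vertiii{U}_{s,T} \le a_1$ for all $t \in [0,T]$. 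Write $\mathcal{I}_0 := \Vert U_0 \Vert_{s-1} + \Vert U_0 \Vert_{L^1}$ and let $C>1$ be the constant appearing in \eqref{finnonest}.

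Inserting these bounds into \eqref{finnonest} gives $E_s(t) \le C\mathcal{I}_0 + C a_1 E_s(t) + C E_s(t)^2$ for $t \in [0,T]$. Choosing $a_1 \le a_0$ small enough that $C a_1 \le 1/2$, the linear-in-$E_s$ term is absorbed into the left-hand side and one is left with the purely quadratic inequality
\[
E_s(t) \le 2C \mathcal{I}_0 + 2C E_s(t)^2, \qquad t \in [0,T],
\]
of the form $\phi \le A + B\phi^2$ with $A = 2C\mathcal{I}_0$, $B = 2C$, for the continuous non-negative function $\phi = E_s$. Whenever $4AB = 16C^2\mathcal{I}_0 < 1$ the quadratic $B\phi^2 - \phi + A$ has two positive roots $0 < y_- < y_+$, the region $\{\phi \ge 0 : B\phi^2 - \phi + A \ge 0\}$ equals $[0,y_-] \cup [y_+, \infty)$, and a direct computation shows $A \le y_- \le 2A$.

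Now impose $\mathcal{I}_0 < \delta_1$ with $\delta_1 = \delta_1(a_1) > 0$ small enough that $16 C^2 \delta_1 < 1$. Then $E_s(0) = \Vert U_0\Vert_{s-1} \le \mathcal{I}_0 < 2C\mathcal{I}_0 = A \le y_-$ (using $C > 1$ and $\mathcal{I}_0 > 0$; the case $U_0 \equiv 0$ is trivial by uniqueness), so the continuous function $E_s$ starts strictly below $y_-$. Since $E_s$ never takes values in the open gap $(y_-, y_+)$ and $[0,T]$ is connected, the intermediate value theorem forces $E_s(t) \le y_-$ for all $t \in [0,T]$, for otherwise $E_s$ would have to cross $(y_-,y_+)$. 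Therefore $E_s(t) \le y_- \le 2A = 4C\mathcal{I}_0$ on $[0,T]$, i.e.
\[
\Vert U(t) \Vert_{s-1} \le (1+t)^{-1/4} E_s(t) \le 4C\,(1+t)^{-1/4}\big( \Vert U_0\Vert_{s-1} + \Vert U_0\Vert_{L^1}\big),
\]
which is \eqref{nonlest} with $C_1 := \max\{4C, 2\}$.

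The only delicate point — bookkeeping rather than a genuine obstacle — is the ordered choice of the three constants: one fixes $a_1 \le a_0$ first so that $C\vertiii{U}_{s,t}E_s(t)$ is absorbed, then $\delta_1 = \delta_1(a_1)$ so that the remaining quadratic term is subcritical, and only then $C_1 = C_1(a_1,\delta_1)$; one must also check these are compatible with the local existence Theorem \ref{thmlocalepert} (and hence with Corollary \ref{thaprest}) so that the hypotheses $\vertiii{U}_{s,T} \le a_1 \le a_0$ and $\mathcal{I}_0 < \delta_1 \le a_0$ are consistent with the local theory. The genuine analytic content has already been spent in deriving \eqref{finnonest}, which rests on the linear decay estimate \eqref{linestsg} obtained from the compensating matrix symbol of Lemma \ref{lemourK}.
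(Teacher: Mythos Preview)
Your proposal is correct and follows the same approach as the paper: the paper simply states that the theorem follows by ``combining Theorem~\ref{thmlocalepert} and estimate~\eqref{finnonest}'', leaving the standard bootstrap/continuity argument implicit, whereas you have spelled out those details (absorbing the $C\vertiii{U}_{s,t}E_s(t)$ term by choosing $a_1$ small, then closing the remaining quadratic inequality by continuity). There is no discrepancy in method.
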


\subsection{Global existence and decay rate of perturbations}

Finally, we conclude our analysis with the following global existence and time decay result for the solutions of the Cauchy problem under consideration. The proof follows a standard continuation method.
%
%
%
\begin{theorem}[global existence and asymptotic decay]
\label{gloexth} 
Under hypotheses \hyperref[H1]{(H$_1$)} - \hyperref[H3]{(H$_3$)}, suppose that $(v_{0} , u_{0})^\top \in \left( H^{s+1}(\R) \times H^{s}(\R)\right) \cap \left( L^{1}(\R) \times L^{1}(\R) \right)$ for $s\geq 3$. Then there exists a positive constant $\delta_{2} (\leq \delta_{1},a_{0})$ such that if $\Vert U_{0} \Vert_{s} + \Vert U_{0} \Vert_{L^{1}}  \leq \delta_{2}$, then the Cauchy problem \eqref{nlop2}, \eqref{InCondNlop} has a unique solution $(v+ \bv, u + \bu)(x,t)$ satisfying 
\begin{equation}\label{globsol}
\begin{split}
& \:\: v \in C\left((0,\infty);H^{s+1}(\mathbb{R})) \cap C^{1}((0,\infty); H^{s-1}(\mathbb{R})\right), \\ & \:\: u \in C\left((0,\infty); H^{s}(\mathbb{R})) \cap C^{1}((0,\infty);H^{s-2}(\mathbb{R})\right) \\ & \:\:(v_{x},u_{x})\in L^{2}\left((0,\infty);H^{s+1}(\mathbb{R}) \times H^{s}(\mathbb{R})\right).
\end{split}
\end{equation}
Furthermore, the solution satisfies the following estimates
\begin{equation}\label{trinormest}
\vertiii{U}_{s,t} \leq C_{2} \Vert U_{0} \Vert_{s},
\end{equation}
and
\begin{equation}\label{globdec}
\Vert U(t) \Vert_{s-1} \leq C_{1} \left(1+ t \right)^{-1/4} (\Vert U_{0} \Vert_{s-1} + \Vert U_{0} \Vert_{L^{1}} ),
\end{equation}
for every $t \in [0, \infty)$.
\end{theorem}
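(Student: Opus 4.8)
The plan is to prove Theorem \ref{gloexth} by a standard continuation (bootstrap) argument that glues together the local existence theory (Theorem \ref{thmlocalepert} and Corollary \ref{thaprest}) with the nonlinear decay estimate (Theorem \ref{thnonest}). First I would fix $s \geq 3$ and assume $\| U_0 \|_s + \| U_0 \|_{L^1} \leq \delta_2$, with $\delta_2 > 0$ to be chosen small enough that $\delta_2 \leq \min\{\delta_1, a_0\}$ and so that the smallness hypothesis \eqref{smallpert} of Theorem \ref{thmlocalepert} holds; in particular $\| U_0 \|_{s-1} \leq \| U_0 \|_s$, so $\| U_0\|_{s-1} + \| U_0\|_{L^1} < \delta_1$. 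By Theorem \ref{thmlocalepert} there is a maximal time $T_{\max} \in (0,\infty]$ and a unique solution $(v + \bv, u+\bu) \in X_s((0,T);\tfrac12 m_0, 2M_0)$ for every $T < T_{\max}$, and the perturbed specific volume stays in the open phase $\cU_r$ by the Sobolev embedding. The goal is to show $T_{\max} = \infty$ together with the uniform bounds \eqref{trinormest} and \eqref{globdec}.

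The core of the argument is a closed a priori estimate. Define, for $T < T_{\max}$,
\[
N(T) := \vertiii{U}_{s,T} + E_s(T),
\]
where $E_s(T) = \sup_{0 \leq z \leq T} (1+z)^{1/4} \| U(z) \|_{s-1}$ as in the discussion preceding Theorem \ref{thnonest}. Combining the a priori estimate \eqref{aprest} of Corollary \ref{thaprest} (valid once $\sup_{[0,T]} \| U(t)\|_{s-1} \leq a_2$, which holds if $N(T)$ is small) with the nonlinear decay estimate \eqref{finnonest}, namely
\[
E_s(T) \leq C (\| U_0 \|_{s-1} + \| U_0\|_{L^1}) + C \vertiii{U}_{s,T} E_s(T) + C E_s(T)^2,
\]
one gets an inequality of the form $N(T) \leq C_\star (\| U_0 \|_s + \| U_0 \|_{L^1}) + C_\star N(T)^2$ for a uniform constant $C_\star$, valid as long as $N(T) \leq a_1$ (the threshold of Theorem \ref{thnonest}, shrunk if necessary to also satisfy the a priori-estimate threshold $a_2$). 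Then I would run the standard continuity argument: choose $\delta_2$ so small that $4 C_\star^2 \delta_2 < \tfrac12$ and $2 C_\star \delta_2 < a_1/2$; the set of $T \in [0,T_{\max})$ with $N(T) \leq 2C_\star \delta_2$ is nonempty (it contains small $T$ by the local estimate), closed (by continuity of $t \mapsto N(t)$, using that the norms in $X_s$ are continuous in $t$), and open: on it the quadratic inequality forces $N(T) \leq 2 C_\star(\| U_0\|_s + \| U_0\|_{L^1}) \leq 2 C_\star \delta_2 < a_1$, well inside the threshold, so the bound cannot be saturated. Hence $N(T) \leq 2C_\star \delta_2$ for all $T < T_{\max}$.

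Finally I would upgrade this to global existence and the stated decay. The uniform bound $\vertiii{U}_{s,T} \leq N(T) \leq 2 C_\star \delta_2$, independent of $T$, prevents blow-up: if $T_{\max} < \infty$, the solution at times approaching $T_{\max}$ stays in a fixed ball of $H^{s+1} \times H^s$ with $v + \bv$ bounded away from the boundary of $\cU_r$, so Theorem \ref{thmlocalepert} (with uniform existence time) lets one continue past $T_{\max}$, a contradiction; therefore $T_{\max} = \infty$ and the regularity \eqref{globsol} holds. Taking $T \to \infty$ in the uniform bounds gives \eqref{trinormest} with $C_2 = 2 C_\star$ (absorbing $\| U_0\|_{L^1}$ into $\| U_0 \|_s$ if one wishes, or keeping the $L^1$ norm, whichever matches the stated constant), and the definition of $E_s$ together with $E_s(\infty) \leq 2 C_\star(\| U_0 \|_{s-1} + \| U_0\|_{L^1})$ yields \eqref{globdec} with $C_1 = 2 C_\star$.

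The main obstacle is not any single computation but the careful orchestration of the constant thresholds: the a priori estimate of Corollary \ref{thaprest} requires $\sup_{[0,T]} \|U\|_{s-1} \leq a_2$, Theorem \ref{thnonest} requires $\vertiii{U}_{s,T} \leq a_1$, and the continuation step needs $N(T)$ strictly below $2C_\star\delta_2$ to be an open condition; one must choose $a_1, a_2, \delta_2$ (and shrink them consistently) so that all three feed into each other without circularity. A secondary technical point is justifying the continuity of $T \mapsto N(T)$ — in particular of $E_s(T)$ and of the time-integral part of $\vertiii{U}_{s,T}$ — which follows from the continuity-in-time of the $X_s$-valued solution but should be noted. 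Everything else is routine given Lemma \ref{lemlindecay}, Corollary \ref{corsgdecay}, Theorem \ref{thnonest} and the convolution bounds of Lemma \ref{lemintbded}.
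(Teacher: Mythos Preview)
Your proposal is correct and follows essentially the same continuation strategy as the paper: both combine local existence (Theorem \ref{thmlocalepert}), the a priori estimate (Corollary \ref{thaprest}), and the nonlinear decay inequality (Theorem \ref{thnonest}/estimate \eqref{finnonest}) to propagate smallness globally. The only difference is packaging: the paper iterates explicitly on time blocks $[nT_1,(n+1)T_1]$, reapplying \eqref{aprest} on each enlarged interval, whereas you run a maximal-time open--closed argument on the bootstrap quantity $N(T)$; these are equivalent formulations of the same idea. One small slip: the hypothesis of Corollary \ref{thaprest} is $\sup_{[0,T]}\|U(t)\|_{s}\leq a_2$ (i.e.\ control of $\|v\|_{s+1}+\|u\|_s$), not $\|U(t)\|_{s-1}$ as you wrote, but this is harmless since your $N(T)$ already dominates $\sup_{[0,T]}\|U(t)\|_{s}$ through $\vertiii{U}_{s,T}$.
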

\begin{proof}
Let us take $a_{2} \leq a_{1}$, with $a_{1}$ and $a_{2}$ as in Theorem \ref{thnonest} and Corollary \ref{thaprest}, respectively. Then we define
\[
 \delta_{2} := \mbox{min} \left\lbrace a_{2},\frac{a_{2}}{C_{0}} , \frac{a_{2}}{C_{2}\left( 1+C_{0}^{2}\right)^{1/2}}, \delta_{1} \right\rbrace.
 \]
Let us assume $\Vert U_{0} \Vert_{s} + \Vert U_{0} \Vert_{L^{1}} \leq \delta_{2}$ is satisfied. Then we have $\Vert U_{0} \Vert_{s} \leq \delta_{2}\leq a_{2} \leq a_{0}$. Therefore, by the local existence of solutions, there exists a constant $T_{1}= T_{1}(a_{0})$ such that the solution exists on $X_{s}(0,T_{1};\frac{m_{0}}{2}, 2M_{0})$, with $\vertiii{U}_{s,T_{1}} \leq C_{0}\delta_{2} \leq a_{2}\leq a_{1}$ and $\Vert U_{0} \Vert_{s-1} + \Vert U_{0} \Vert_{L^{1}}\leq \delta_{2}\leq \delta_{1}$ by the definition of $\delta_{2}$. From $\vertiii{U}_{s,T_{1}}\leq a_{2}$, we have
\[
 \sup_{0 \leq t \leq T_{1}}  \Vert  U (t)\Vert_{s} \leq \vertiii{U}_{s,T_{1}} \leq a_{2}. 
\] 
 
Thus, the conditions of Theorem \ref{thnonest} are satisfied and estimate \eqref{aprest} holds. Therefore we obtain
\[
\Vert U(t) \Vert_{s-1} \leq C_{1}\left( 1 + t \right)^{-1/4} (\Vert U_{0} \Vert_{s-1} + \Vert U_{0} \Vert_{L^{1}}),\quad \mbox{for}\:\: t\in [0,T_{1}],
\]
and
\[
\vertiii{U}_{s,T_{1}} \leq C_{2}\Vert U_{0} \Vert_{s}. 
\]
Since $ \Vert U(T_{1}) \Vert_{s}  \leq \vertiii{U}_{s,T_{1}} \leq a_{2} \leq a_{0}$, we can consider the Cauchy problem \eqref{nlop2}, \eqref{InCondNlop} taking the initial time equal to $T_{1}$ and then find a solution in $[T_{1}, 2T_{1}]$ with the estimate $\vertiii{U}_{s,[T_{1},2T_{1}]} \leq C_{0} \Vert U(T_{1}) \Vert_{s}$. Hence, we have 
\begin{align*}
\vertiii{U}_{s,2T_{1}} &= \left( \vertiii{U}_{s,T_{1}}^{2}+\vertiii{U}_{s,[T_{1},2T_{1}]}^{2} \right)^{1/2} \\ & \leq \left(1+ C_{0}^{2} \right)^{1/2}\vertiii{U}_{s,T_{1}} \\&\leq C_{2}\left(1+ C_{0}^{2} \right)^{1/2} \Vert U_{0} \Vert_{s} \\ 
& \leq C_{2}\left(1 + C_{0}^{2} \right)^{1/2} (\Vert U_{0} \Vert_{s} + \Vert U_{0} \Vert_{L^{1}}) \\
&\leq C_{2}(1+C_{0}^{2})^{1/2} \delta_{2} \leq a_{2} \leq a_{1}. 
\end{align*}
The last estimate implies that
\[
\sup_{0 \leq t \leq 2T_{1}}  \Vert U(t) \Vert_{s} \leq \vertiii{U}_{s,2T_{1}} \leq a_{2}. 
\]
The above estimate (together with the already verified condition $\Vert U_{0} \Vert_{s-1} +\|U_0\|_{L^1}  \leq \delta_{1}$) enables us to obtain the estimates \eqref{nonlest} and \eqref{aprest} for $t\in [0, 2T_{1}]$, yielding
\[
\Vert U(t) \Vert_{s-1} \leq C_{1}\left( 1 + t \right)^{-1/4} (\Vert U_{0} \Vert_{s-1} + \Vert U_{0} \Vert_{L^{1}}), \quad \mbox{for}\:\: t \in [0, 2T_{1}],
\]
and
$$ \vertiii{U}_{s,2T_{1}} \leq C_{2}\Vert U_{0} \Vert_{s} ,$$
hold. We can repeat the argument and obtain the estimates \eqref{trinormest} and \eqref{globdec} for the time interval $[0, 3T_{1}]$ and so on. The proof is complete.
\end{proof}

\section*{Acknowledgements}

The authors thank Jeffrey Humpherys for useful discussions. The authors are also grateful to Prof. H. Hattori for providing a copy of the article \cite{HaLi96b}. The work of J. M. Valdovinos was partially supported by CONACyT (Mexico), through a scholarship for graduate studies, grant no. 712874. The work of R. G. Plaza was partially supported by DGAPA-UNAM, program PAPIIT, grant IN-104922.

\appendix
\section{Integral estimates}

For completeness, in this Appendix  we show that the time-dependent integrals defined in \eqref{I0}, \eqref{mu1} and in \eqref{mu2}, are uniformly bounded.

\begin{lemma}
\label{lemintbded}
There exists a uniform constant $C > 0$ such that
\[
\begin{aligned}
I_0(t) &= (1+t)^{\ell + 1/2} \int_{-1}^1 \xi^{2 \ell} e^{-kt \xi^2} \, d\xi \leq C,\\
I_1(t) &= \sup_{0\leq z \leq t}\left( 1+ z\right)^{1/4} \int_{0}^{z}e^{-c_{1}(z-\tau)}(1+ \tau)^{-1/4}\: d\tau \: + \\ & \:\:\:\: + \sup_{0\leq z \leq t}\left( 1 + z \right)^{1/4}\left[ \int_{0}^{z}e^{-2c_{1}(z - \tau)}(1 + \tau)^{-1/2} \: d\tau \right]^{1/2} \leq C,
\\
I_2(t) &= \sup_{0 \leq z \leq t} \left( 1+ z \right)^{1/4}\int_{0}^{z}\left( 1+z-\tau \right)^{-3/4}(1+\tau)^{-1/2} \, d \tau \leq C
\end{aligned}
\]
for all $t \geq 0$ and fixed $0 \leq \ell \leq s-1$, $s \geq 2$.
\end{lemma}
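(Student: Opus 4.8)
The plan is to treat the three integrals separately, in each case reducing to an elementary estimate obtained by splitting the time interval at its midpoint. For $I_0(t)$ I would first dispose of the range $0 \leq t \leq 1$, where $(1+t)^{\ell+1/2} \leq 2^{\ell+1/2}$ and $\int_{-1}^1 \xi^{2\ell} e^{-kt\xi^2}\, d\xi \leq \int_{-1}^1 \xi^{2\ell}\, d\xi = 2/(2\ell+1)$, so the bound is trivial. For $t > 1$ the rescaling $\eta = t^{1/2}\xi$ gives
\[
\int_{-1}^1 \xi^{2\ell} e^{-kt\xi^2}\, d\xi = t^{-(\ell+1/2)} \int_{-t^{1/2}}^{t^{1/2}} \eta^{2\ell} e^{-k\eta^2}\, d\eta \leq t^{-(\ell+1/2)} \int_{\R} \eta^{2\ell} e^{-k\eta^2}\, d\eta,
\]
and the Gaussian moment on the right is a finite constant depending only on $\ell$ and $k$; since $(1+t)^{\ell+1/2} \leq (2t)^{\ell+1/2}$ for $t>1$, this closes the bound for $I_0$.

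For the integrals appearing in $I_1$ and $I_2$ I would state and prove once the following elementary convolution estimate. Splitting the integral at $\tau = z/2$, bounding $(1+z-\tau)^{-a}$ (resp.\ $e^{-c(z-\tau)}$) on $[0,z/2]$ by its value at $\tau = z/2$ and $(1+\tau)^{-b}$ on $[z/2,z]$ by its value at $\tau = z/2$, and integrating the remaining factor, one obtains for $b \in (0,1)$
\[
\int_0^z (1+z-\tau)^{-a}(1+\tau)^{-b}\, d\tau \leq C(1+z)^{1-a-b}, \qquad 0 < a < 1,\ \ a+b>1,
\]
\[
\int_0^z e^{-c(z-\tau)}(1+\tau)^{-b}\, d\tau \leq C(1+z)^{-b},
\]
where in the second inequality the contribution of $[0,z/2]$ is of size $e^{-cz/2}(1+z)^{1-b}$, which is $O((1+z)^{-b})$ because $e^{-cz/2}(1+z)$ is bounded. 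The constants here depend only on $a$, $b$ and $c$.

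With this in hand the conclusions are immediate. For $I_2$, the first estimate with $a=3/4$, $b=1/2$ (so $a+b=5/4>1$, hence $1-a-b=-1/4$) gives $\int_0^z(1+z-\tau)^{-3/4}(1+\tau)^{-1/2}\,d\tau \leq C(1+z)^{-1/4}$, so $(1+z)^{1/4}$ times this is bounded uniformly in $z$, i.e.\ $I_2(t) \leq C$. For the first term of $I_1$, the exponential estimate with $b=1/4$ gives $\int_0^z e^{-c_1(z-\tau)}(1+\tau)^{-1/4}\,d\tau \leq C(1+z)^{-1/4}$; for the second term, the exponential estimate with $b=1/2$ and $c = 2c_1$ gives $\int_0^z e^{-2c_1(z-\tau)}(1+\tau)^{-1/2}\,d\tau \leq C(1+z)^{-1/2}$, whose square root is $C(1+z)^{-1/4}$. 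Multiplying by $(1+z)^{1/4}$ and taking the supremum over $z \in [0,t]$ yields $I_1(t) \leq C$ uniformly in $t$.

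I do not expect any serious obstacle: the statement is pure bookkeeping with convolution-type integrals. The only points that need a little care are to arrange the midpoint splitting so that each decaying factor is always evaluated away from its singularity (or away from where the exponential is near $1$), and to observe that the rapidly decaying contribution $e^{-cz/2}$ is dominated by any polynomial prefactor, so it does not degrade the algebraic rate.
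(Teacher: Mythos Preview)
Your argument is correct. For $I_0$ you and the paper use essentially the same rescaling ($\eta=t^{1/2}\xi$ versus $y=\xi^2 t$), though your version is tidier: you invoke the finite Gaussian moment $\int_\R \eta^{2\ell}e^{-k\eta^2}\,d\eta$ directly, whereas the paper introduces an auxiliary cutoff $R$ and checks that the tail $\int_R^t y^{\ell-1/2}e^{-ky}\,dy$ stays bounded. For $I_1$ the paper simply declares the argument analogous and omits it, so your explicit midpoint splitting is a genuine addition. The real methodological difference is in $I_2$: the paper performs the substitution $y=(1+z-\tau)^{1/4}/(2+z)^{1/4}$ and identifies the limit as $z\to\infty$ with the incomplete elliptic integral $4F(\tfrac{\pi}{2}\,|\,{-1})$, whereas you use the standard midpoint splitting for convolutions of the form $\int_0^z(1+z-\tau)^{-a}(1+\tau)^{-b}\,d\tau$. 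Your route is more elementary and immediately reusable for any exponents with $a,b\in(0,1)$ and $a+b>1$; the paper's route is more specific to the exponents at hand but yields the sharp limiting constant. Either suffices for the lemma as stated.
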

\begin{proof}
The claims for $I_0(t)$ and $I_1(t)$ follow from standard calculus tools. For instance, $I_0(t)$ is clearly continuous and non negative for all $t \geq 0$. Notice that $I_0(0) = 2/(2\ell+ 1) > 0$ for $\ell \geq 0$. Moreover, $I_0(t)$ is uniformly bounded in any compact interval $t \in [0,R]$ for any $R > 0$, with $I_0(t) \leq C_R$, for some $C_R > 0$. Making the change of variables $y = \xi^2 t$ for $t > 0$ we get
\[
I_0(t) = \Big( 1 + \frac{1}{t}\Big)^{\ell + 1/2} \int_0^{t} y^{\ell -1/2} e^{-ky} \, dy.
\]
Hence, it suffices to show that
\[
\lim_{t \to \infty} \int_0^{t} y^{\ell -1/2} e^{-ky} \, dy,
\]
exists. Observe that for $R \gg 1$, fixed but sufficiently large, if $y \geq R$ then $y^{\ell -1/2} \leq C e^{ky/2}$ for some $C > 0$. Thus,
\[
\int_R^t y^{\ell -1/2} e^{-ky} \, dy \leq C \int_R^t e^{-ky/2} \, dy \leq \frac{2C}{k} e^{-kR/2}.
\]
Hence, for $t \geq R$ with $R \gg 1$ we obtain 
\[
\begin{aligned}
I_0(t) &= \Big( 1 + \frac{1}{t}\Big)^{\ell + 1/2} \Big[ \int_0^R y^{\ell -1/2} e^{-ky} \, dy + \int_R^t y^{\ell -1/2} e^{-ky} \, dy \Big] \\
&\leq \Big( 1 + \frac{1}{R}\Big)^{\ell + 1/2} \Big[ \int_0^R y^{\ell -1/2} e^{-ky} \, dy + \frac{2C}{k} e^{-kR/2} \Big] = C_R.
\end{aligned}
\]
This shows the claim. Thanks to the exponential term inside the integrals, the proof of the bound for $I_1(t)$ is analogous and we omit it.

To prove the bound for $I_2(t)$ it suffices to show that 
\[
A_2(z) = \left( 1+ z \right)^{1/4}\int_{0}^{z}\left( 1+z-\tau \right)^{-3/4}(1+\tau)^{-1/2} \, d \tau,
\]
is uniformly bounded as $z \to \infty$. Make the change of variables, 
\[
y = (1+z-\tau)^{1/4}/(2+z)^{1/4},
\] 
inside the integral in order to obtain
\[
\begin{aligned}
A_2(z) &= 4 \frac{(1+z)^{1/4}}{(2+z)^{1/4}} \int_{(2+z)^{-1/4}}^{(1+z)^{1/4}/(2+z)^{1/4}} \frac{dy}{\sqrt{1-y^2} \sqrt{1+y^2}}\\
& \to 4 \int_0^1 \frac{dy}{\sqrt{1-y^2} \sqrt{1+y^2}} = 4 F( \tfrac{\pi}{2} \, | -\!1),
\end{aligned}
\]
as $z \to \infty$, where $F$ is the incomplete elliptic integral of the first kind (cf. \cite{ByFr71,Lawd89}),
\[
F(\varphi \, | \, k) = \int_0^{\sin \varphi} \frac{dy}{\sqrt{1-y^2} \sqrt{1-ky^2}}.
\]
By elementary properties of elliptic integrals we observe that $A_2(z)$ has a finite limit when $z \to \infty$ (in fact, $A_2 \to 4 F(\tfrac{\pi}{2} \, | -\!1) \approx 5.2441$) and, since $A_2(z)$ is continuous in any compact interval $z \in [0,R]$, we obtain the result.
\end{proof}

\def\cprime{$'$}





\end{document}